\newcolumntype{L}{>{$}l<{$}} 
\newcolumntype{C}{>{$}c<{$}} 
\numberwithin{equation}{section}
\newtheorem{prop}{Proposition}
\numberwithin{prop}{section}
\newtheorem{thm}[prop]{Theorem}
\newtheorem{lemma}[prop]{Lemma}
\newtheorem{cor}[prop]{Corollary}
\theoremstyle{remark}
\newcommand{\Q}{\mathbb Q}
\newcommand{\Z}{\mathbb Z}
\newcommand{\C}{\mathbb C}
\newcommand{\A}{\mathbb A}
\newcommand{\E}{\mathbb E}
\newcommand{\F}{\mathbb F}
\newcommand{\Fr}{\mathrm{Fr}}
\newcommand{\Aut}{\mathrm{Aut}}
\newcommand{\Cent}{\mathrm{Cent}}
\renewcommand{\~}{\tilde}
\newcommand{\bmx}{\begin{pmatrix}}
\newcommand{\emx}{\end{pmatrix}}
\DeclareMathOperator{\GL}{GL}
\DeclareMathOperator{\PGL}{PGL}
\DeclareMathOperator{\SL}{SL}
\DeclareMathOperator{\PSL}{PSL}
\DeclareMathOperator{\Gal}{Gal}
\DeclareMathOperator{\tr}{tr}
\begin{document}

\title[Distinguishing finite group characters]{Distinguishing finite group characters
and refined local-global phenomena}
\date{\today}

\author{Kimball Martin}
\address{Department of Mathematics, University of Oklahoma, Norman, OK 73019 USA}
\author{Nahid Walji}
\address{Department of Mathematics, Occidental College, Los Angeles, CA 90041 USA}

\maketitle

\begin{abstract}
Serre obtained a sharp bound on how often two irreducible degree $n$ complex
characters of a finite group can agree, which tells us how many local
factors determine an Artin $L$-function.  We consider the more delicate question of finding
a sharp bound when these objects are primitive, and answer these questions for $n=2,3$.
This provides some insight on refined strong multiplicity one phenomena for automorphic 
representations of $\GL(n)$.  For general $n$, we also answer the character question for the 
families $\PSL(2,q)$ and $\SL(2,q)$.  
\end{abstract}

\section{Introduction}

In this paper, we consider two questions about seemingly different topics:

\begin{enumerate}
\item How often can two characters of a finite group agree?

\item How many local Euler factors determine an $L$-function?
\end{enumerate}

The first question is just about characters of finite groups, and the second is a
refined local-global principle in number theory.  However, it has been observed, notably by 
Serre, that being able to say something about (1) allows one to say something 
about (2), which is our primary motivation, though both are natural questions.
Our main results about the first question are for comparing
primitive characters of degree $\le 3$ and characters of $\PSL(2,q)$ or $\SL(2,q)$.
This will yield sharp bounds on how many Euler factors one needs to
distinguish primitive 2- or 3-dimensional $L$-functions of Galois representations.
We address them in turn.

\subsection{Distinguishing group characters}
Let $G$ be a finite group, and $\rho, \rho'$ be two complex representations of 
$G$ with characters $\chi, \chi'$.  We will study the quantities
\[ \delta(\rho, \rho') = \delta(\chi, \chi') = 
 \frac  {| \{ g \in G : \chi(g) \ne \chi'(g) \} |}{|G|}. \]

Specifically, let $\delta_n(G)$ be the minimum of $\delta(\rho, \rho')$ as $\rho, \rho'$ range
over pairs of inequivalent irreducible $n$-dimensional representations of $G$,
with the convention that $\delta_n(G) = 1$ if there are no such pairs $\rho, \rho'$.
Note that $\delta_n(G)$ tells us what fraction of elements of $G$ we must check 
to distinguish irreducible degree $n$ characters.  Put 
$d_n = \inf_G \{ \delta_n(G) \}$.

An elementary consequence of orthogonality relations is
\begin{prop} \label{prop1} We have $d_n \ge  \frac 1{2n^2}$.
\end{prop}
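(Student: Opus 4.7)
The plan is to use the standard inner product of characters. Let $\chi, \chi'$ be two inequivalent irreducible degree $n$ characters of $G$. By the orthogonality relations,
\[ \langle \chi - \chi', \chi - \chi' \rangle = \langle \chi, \chi \rangle + \langle \chi', \chi' \rangle - 2\langle \chi, \chi' \rangle = 2. \]
Hence
\[ \frac{1}{|G|}\sum_{g \in G} |\chi(g) - \chi'(g)|^2 = 2. \]

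Next I would use the pointwise bound $|\chi(g)|, |\chi'(g)| \le n$, which holds because each of $\chi(g), \chi'(g)$ is a sum of $n$ roots of unity (eigenvalues of $\rho(g), \rho'(g)$). Thus $|\chi(g) - \chi'(g)| \le 2n$. Since the summand vanishes outside the set $S = \{g \in G : \chi(g) \ne \chi'(g)\}$, we get
\[ 2 = \frac{1}{|G|}\sum_{g \in S} |\chi(g) - \chi'(g)|^2 \le \frac{|S|}{|G|}\cdot (2n)^2 = 4n^2 \, \delta(\chi, \chi'). \]
Rearranging yields $\delta(\chi, \chi') \ge \frac{1}{2n^2}$, and taking the infimum over all such pairs and all $G$ gives $d_n \ge \frac{1}{2n^2}$.

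There is no real obstacle here; the proof is a one-line consequence of Parseval/orthogonality combined with the trivial bound $|\chi(g)| \le \chi(1) = n$. The only subtlety worth flagging is that the bound $|\chi(g) - \chi'(g)| \le 2n$ is almost never attained simultaneously with a large support $S$, which suggests (and the later sections of the paper presumably confirm) that Proposition \ref{prop1} is far from sharp in most cases, motivating the finer analysis under primitivity hypotheses that follows.
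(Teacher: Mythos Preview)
Your proof is correct and essentially identical to the paper's: both use orthogonality relations together with the trivial bound $|\chi(g)| \le n$, the only difference being that the paper expands $(\chi_1,\chi_1) - (\chi_1,\chi_2) = 1$ and bounds each summand by $2n^2$, whereas you expand $\langle \chi-\chi', \chi-\chi'\rangle = 2$ and bound each summand by $(2n)^2$. One small correction to your closing remark: Serre's construction (Theorem~\ref{thm:serre}) shows that the bound $d_n \ge \frac{1}{2n^2}$ is in fact sharp, so Proposition~\ref{prop1} is \emph{not} far from sharp for $d_n$ itself; the improvement you anticipate comes only after restricting to primitive representations, i.e., for $d_n^\natural$.
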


Buzzard, Edixhoven and Taylor constructed examples to show this bound
is sharp when $n$ is a power of 2, which Serre generalized this to arbitrary $n$
(see \cite{ramakrishnan:motives}).

\begin{thm}[Serre] \label{thm:serre}
For any $n$, there exists $G$ such that $\delta_n(G) =  \frac 1{2n^2}$,
so $d_n = \frac 1{2n^2}$.
\end{thm}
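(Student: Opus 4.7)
The plan is to establish the upper bound $d_n \le 1/(2n^2)$ by explicit construction, since Proposition~\ref{prop1} already supplies the reverse inequality. Inspecting that proof---which combines $\|\chi-\chi'\|^2 = 2$ with the pointwise bound $|\chi(g) - \chi'(g)| \le 2n$---one sees that equality forces $|\chi(g) - \chi'(g)| = 2n$ throughout the disagreement set. Hence $\rho(g)$ and $\rho'(g)$ must be $\pm I_n$ with opposite signs there, so every disagreement point is central in both representations. This rigidity dictates the shape of the construction: $\rho$ and $\rho'$ should agree off a central set on which one is scalar $+I_n$ and the other $-I_n$.

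A convenient device is to take $\rho' = \rho \otimes \epsilon$ for a character $\epsilon \colon G \to \{\pm 1\}$, so that $\chi$ and $\chi'$ disagree on exactly $\{g : \epsilon(g) = -1 \text{ and } \chi_\rho(g) \ne 0\}$. To make this set small and automatically central, I would look for an $n$-dimensional irreducible $\rho$ of some group $H$ whose character vanishes off $Z(H)$, and then build $G$ so that $\epsilon$ detects an extra $\Z/2\Z$ factor.

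Concretely, write $n = \prod p_i^{m_i}$ and let $H = \prod_i H_i$, where $H_i$ is an extraspecial $p_i$-group of order $p_i^{2m_i+1}$. Each $H_i$ admits a faithful irreducible representation $\rho_i$ of dimension $p_i^{m_i}$ whose character is supported on $Z(H_i) \cong \Z/p_i\Z$, taking scalar values on the center and vanishing elsewhere. The tensor product $\rho = \bigotimes_i \rho_i$ is then an irreducible $n$-dimensional representation of $H$ with character supported on $Z(H)$, and $[H : Z(H)] = n^2$. Setting $G = H \times \Z/2\Z$, let $\rho_1 = \rho \otimes \mathbf{1}$ and $\rho_2 = \rho \otimes \epsilon$, where $\epsilon$ is the nontrivial character of $\Z/2\Z$. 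Both are irreducible of dimension $n$, and inequivalent because their characters differ.

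The remaining computation is routine: the disagreement set is $Z(H) \times \{-1\}$, of size $|Z(H)| = \prod_i p_i$, while $|G| = 2|H| = 2n^2 \prod_i p_i$, giving $\delta(\rho_1,\rho_2) = 1/(2n^2)$. The main obstacle is identifying the right group $H$ for arbitrary $n$---the prime-power case is the Buzzard--Edixhoven--Taylor example, and the product-of-extraspecials construction handles composite $n$---together with the standard but nontrivial fact that faithful extraspecial irreducibles have scalar values on the center and vanish off it. Once these are in hand, the irreducibility of the $\rho_i$ and the $\delta$ computation are immediate.
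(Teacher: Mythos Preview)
Your proof is correct and follows essentially the same approach as the paper's: both take $G = H \times \{\pm 1\}$ and twist an irreducible $n$-dimensional representation of $H$ whose character vanishes off $Z(H)$ by the sign character, the key point being $[H:Z(H)] = n^2$. The only difference is the specific choice of $H$---the paper uses a group of central type with cyclic center of order $n$ (citing $\bar H = A \times A$ with $|A|=n$ as one example), whereas you give the more explicit product of extraspecial $p$-groups, which has $|Z(H)| = \prod p_i$ rather than $n$ but works for exactly the same reason.
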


In particular, the infimum in $d_n$ is a minimum.
We will recall the proof of Proposition \ref{prop1} and Serre's construction in Section
\ref{sec:serre}.  For now, the main points to note are that Serre's examples must be solvable
and the representations are induced.

In this paper, we consider two kinds of refinements of determining $d_n$.
The first refinement is about restricting to primitive representations and the second
is about restricting to certain families of groups.

Define $\delta^\natural_n(G)$ to be the infimum of $\delta(\rho, \rho')$
where $\rho, \rho'$ range over pairs of inequivalent
irreducible primitive $n$-dimensional complex representations of $G$.
Let $d^\natural_n = \inf_G \{ \delta_n^\natural(G) \}$.  
From Serre's theorem, we get a trivial bound $d^\natural_n \ge d_n = \frac 1{2n^2}$.

Our first result is to determine $d^\natural_n$ for $n \le 3$.

\begin{thm} \label{thm1} We have $d_1^\natural = \frac 12$, $d_2^\natural = \frac 14$
and $d_3^\natural = \frac 27$.  Furthermore, $\delta^\natural_2(G) = \frac 14$ if and only if 
$G$ is an extension of $H \times_{C_2} C_{2m}$ where $m \in \mathbb N$
and $H=[48,28]$ or $H=[48,29]$.  Also,
$\delta^\natural_3(G) = \frac 27$ if and only if $G$ is an extension of $\PSL(2,7)$.
\end{thm}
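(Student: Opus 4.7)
The plan is to handle $n=1,2,3$ in succession, with the structural input increasing in complexity. For $n=1$, the result is elementary: given distinct linear characters $\chi,\chi'$, the character $\psi=\chi(\chi')^{-1}$ is nontrivial, so $\ker\psi$ is a proper subgroup of $G$ and hence has index $\ge 2$, giving $\delta(\chi,\chi')\ge 1/2$, with equality for the two linear characters of $C_2$. For $n\in\{2,3\}$ the overall strategy is to exploit the classical classification of finite primitive subgroups of $\PGL(n,\C)$: for a primitive irreducible $\rho\colon G\to\GL(n,\C)$, the projective image $\bar\rho(G)\subseteq\PGL(n,\C)$ is a finite primitive subgroup. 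Since $\delta(\chi,\chi')$ depends only on the pair $(\rho,\rho')$ up to replacing $G$ by $G/(\ker\rho\cap\ker\rho')$, and is unchanged by twisting either representation by a linear character, we reduce to studying pairs of primitive irreducible $n$-dimensional characters on the finite central extensions (by scalars) of the primitive subgroups of $\PGL(n,\C)$.

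For $n=2$, Klein's theorem forces $\bar\rho(G)\in\{A_4,S_4,A_5\}$. The relevant finite central scalar extensions to examine are those of $\SL(2,3)$ (the Schur cover of $A_4$), the two double covers $\tilde S_4^{\pm}$ of $S_4$ --- which are SmallGroup$(48,28)$ and SmallGroup$(48,29)$ --- and $\SL(2,5)$ (the Schur cover of $A_5$). Reading off the primitive $2$-dimensional irreducible characters from the character tables and computing $\delta(\chi,\chi')$ for every admissible pair, the minimum value turns out to be $1/4$, realized precisely on each of the two $S_4$-covers between their inequivalent primitive $2$-dimensional characters, while $\SL(2,3)$ and $\SL(2,5)$ give strictly larger values. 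The full classification of extremal $G$ then follows by noting that twisting by a central character and forming a central product with a cyclic group of scalars leaves $\delta$ unchanged, which accounts for the $\times_{C_2}C_{2m}$ factor, and that inflation through a normal subgroup also preserves $\delta$, explaining the phrasing ``$G$ is an extension of''.

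For $n=3$, the analogous structural input is Blichfeldt's classification of finite primitive subgroups of $\PGL(3,\C)$: the list comprises $A_5$, $A_6$, $\PSL(2,7)$, the Hessian group of order $216$, and a few exceptional overgroups of the Hessian group. For each candidate we pass to a quasisimple Schur cover, list the primitive $3$-dimensional irreducible characters, and compute $\delta$ on pairs. The group $\PSL(2,7)\cong\PSL(3,2)$ has two complex-conjugate primitive degree-$3$ characters that agree except on the two conjugacy classes of order-$7$ elements, together containing $2|\PSL(2,7)|/7$ elements, yielding $\delta=2/7$; a case-by-case check confirms no other candidate gives a smaller value.

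The main obstacle is the case-by-case verification in each classification: even though the Klein and Blichfeldt lists are finite, one must carefully rule out every alternative central cover, twist, or Schur-multiplier extension as realising a smaller $\delta$. In particular, proving that both double covers $\tilde S_4^{\pm}$ (which have distinct character tables) attain the extremal value $1/4$, and that no other cover or twist does so, is the most delicate piece of bookkeeping in the $n=2$ argument, while for $n=3$ the presence of several Hessian-type groups of moderate order lengthens the casework considerably.
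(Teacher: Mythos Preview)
Your overall strategy --- classify primitive projective images via Klein/Blichfeldt and compute $\delta$ on finitely many minimal lifts --- is the same as the paper's, but the reduction step has a genuine gap. After replacing $G$ by $G/(\ker\rho_1\cap\ker\rho_2)$ you have $N_1\cap N_2=1$ where $N_i=\ker\rho_i$, but $G$ need not be a central-by-scalar extension of any single primitive subgroup of $\PGL_n(\C)$: the images $G_i=\rho_i(G)$ may have different projective images $\bar G_1\not\simeq\bar G_2$, and even when $\bar G_1\simeq\bar G_2$ both $N_i$ can be nontrivial with $G$ strictly larger than either $G_i$. (Your claim that $\delta$ is unchanged by twisting one $\rho_i$ by a linear character is also false --- twisting both by the same character preserves $\delta$, but that does not help.) Thus computing $\delta_n^\natural(H)$ on each candidate $H$ does not, by itself, give a lower bound for the infimum $d_n^\natural$ over \emph{all} $G$.

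The paper supplies the missing machinery in Section~\ref{sec:general}. For each minimal lift $H$ one records $\alpha_n(H)$, the minimal proportion of elements with nonzero trace. Lemma~\ref{lem:34} shows that if $\rho_1(N_2)$ meets $Z(G_1)$ nontrivially then $\delta(\rho_1,\rho_2)\ge\tfrac12\alpha_n(H_1)$; verifying that each non-simple minimal lift has no $Z(H)$-free normal subgroup then forces either this bound or $N_2=1$, whence $G=G_1$ and Lemma~\ref{lem:35} gives $\delta_n^\natural(G)\ge\min\{\tfrac12\alpha_n(H),\delta_n^\natural(H)\}$. This is packaged as Corollary~\ref{cor:method} and suffices for $n=2$. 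For $n=3$ the minimal lifts $A_5$ and $\PSL(2,7)$ are simple, hence are $Z(H)$-free normal subgroups of themselves, and the argument breaks; Proposition~\ref{prop:deg3} handles these cases separately via the distribution of $|\tr\rho|$-values (Table~\ref{tabby}) when $\bar G_1\not\simeq\bar G_2$, and a direct coset argument when both $N_i$ are nontrivial. A minor factual correction: for $A_6$ the relevant lift is the Valentiner triple cover $V_{1080}$, not the sixfold Schur cover, and the three non-simple projective images $G_{36}\subset G_{72}\subset G_{216}$ each admit several minimal lifts that must be enumerated individually.
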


Here $G$ being an extension of $H$ by some $N \lhd G$
means $G/N \simeq H$.
The groups $[48,28]$ and $[48,29]$ are the two groups of order 48 
which are extensions of $S_4$ by the cyclic group $C_2$ and contain $\SL(2,3)$.

The $n=1$ case is already contained in Proposition \ref{prop1} as $d_1 = d_1^\natural$.  
For $n=2, 3$, these bounds are much better than the trivial bounds 
$d_2^\natural \ge \frac 18$ and $d_3^\natural \ge \frac 1{18}$ from
Proposition \ref{prop1}.  For $n=2$, related results were
previously obtained by the second author in \cite{walji} and will be discussed below.

Note that while $d_n$ is a strictly decreasing sequence for $n \ge 1$, our result says
this is not the case for $d_n^\natural$.

\medskip
In a slightly different direction, one can look for stronger lower bounds
than $\frac 1{2n^2}$ for certain families of groups.  We do not begin a serious investigation
of this here, but just treat two basic families of finite groups of Lie type which are related
to the calculations for $\delta^\natural_2(G)$ and $\delta^\natural_3(G)$.

\begin{thm} \label{thm2}
We compute $\delta_n(G)$ and
$\delta^{\natural}_n(G)$ where $G = \PSL(2,q)$ and $G=\SL(2,q)$;
 for $n$ not listed explicitly below, $\delta_n(G) = \delta^{\natural}_n(G)=1$.

For $G = \SL(2,q)$ with $q$ arbitrary or for $G = \PSL(2,q)$ with $q$ even,
\begin{align*}
\delta_n(G) = \delta^{\natural}_n (G)  \
     \left\{ \begin{array}{cll}
= \frac{2}{q}& \text{if }n = \frac{q \pm 1}{2} \text{ and $q$ is odd},& \\
\ge \frac{1}{6}& \text{if }n = q - 1,&\\
     \end{array} \right.
\end{align*}
and $\delta_{q+1} (G) \ge \frac{1}{6}$ whereas $\delta^{\natural}_{q+1} (G) =  1$.

For $G = \PSL(2,q)$ and $q$ odd,
\begin{align*}
\delta_n(G) = \delta^{\natural}_n (G)  \  \left\{ \begin{array}{cll}
= \frac{2}{q}&\text{if } n = \frac{q - 1}{2} \text{ and $q \equiv 3 \bmod 4$},& \\
= \frac{2}{q}&\text{if } n = \frac{q + 1}{2} \text{ and $q \equiv 1 \bmod 4$},& \\
\ge \frac{1}{6}&\text{if } n = q - 1,&\\
     \end{array} \right.
\end{align*}
and $\delta_{q+1} (G) \ge  \frac{1}{6}$ whereas $\delta^{\natural}_{q+1} (G) =  1$.
\end{thm}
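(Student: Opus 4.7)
My plan is to compute $\delta_n(G)$ and $\delta_n^{\natural}(G)$ directly from the classical character tables of $\SL(2,q)$ and $\PSL(2,q)$. For $q$ odd, the irreducible complex characters of $\SL(2,q)$ comprise the trivial and Steinberg characters, the $(q-3)/2$ irreducible principal series $\pi_\alpha$ of dimension $q+1$ parameterized by non-trivial non-order-two characters of the split torus $T \cong \Z/(q-1)\Z$ (modulo $\alpha \sim \alpha^{-1}$), the $(q-1)/2$ cuspidal (discrete series) representations $\pi_\theta$ of dimension $q-1$ parameterized similarly by characters of the non-split torus $T' \cong \Z/(q+1)\Z$, and two pairs of half-representations of dimensions $(q+1)/2$ and $(q-1)/2$ arising as the irreducible summands of the reducible principal series attached to the order-two characters of $T$ and $T'$ respectively; for $q$ even, $\SL(2,q) = \PSL(2,q)$ has trivial center and no half-representations appear. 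Descending to $\PSL(2,q)$ for $q$ odd restricts to irreducibles trivial on $\{\pm I\}$; an easy central-character computation shows that the pair of half-principal-series survives precisely when $q \equiv 1 \pmod 4$ and the pair of half-cuspidals precisely when $q \equiv 3 \pmod 4$.

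To compute $\delta$ for each half-representation pair I would use the classical character formulas: the two half-reps agree on all classes except the four regular unipotent classes of $\SL(2,q)$, each of size $(q^2-1)/2$, where they take values of the form $\tfrac{1}{2}(1 \pm \sqrt{q^\ast})$ with $q^\ast = (-1)^{(q-1)/2} q$; summing gives $\delta = 2/q$, and the folding onto $\PSL(2,q)$ preserves this proportion. For two distinct principal series $\pi_a, \pi_b$, the formula $\pi_\alpha(t) = \alpha(t) + \alpha(t)^{-1}$ on non-central split-torus elements (and $0$ off the split torus and center) reduces the disagreement set to non-central split classes lying outside $A \cup B$, where $A = \ker(a-b)$ and $B = \ker(a+b)$ in $T$. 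Writing $d, e$ for the orders of $a-b, a+b$ in $\Z/(q-1)\Z$, distinctness of $\pi_a, \pi_b$ rules out $(d,e) = (2,2)$, and a direct case analysis shows the extremal configuration $(d,e) = (2,3)$ (requiring $6 \mid q-1$) gives $|A| = (q-1)/2$, $|B| = (q-1)/3$, $|A \cap B| = (q-1)/6$, hence $|A \cup B| = 2(q-1)/3$ and $\delta = 1/6$ exactly, with every other configuration yielding a strictly larger value. A parallel argument for cuspidals using the non-split torus and class size $q(q-1)$ yields $\delta_{q-1}(G) \ge 1/6$.

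For the primitivity conclusions, the $(q+1)$-dimensional principal series are parabolically induced from the Borel and therefore not primitive; since these exhaust the irreducibles of dimension $q+1$, this gives $\delta_{q+1}^{\natural}(G) = 1$. For the remaining listed dimensions I would appeal to the standard fact that for $q \ge 4$ the minimal index of a proper subgroup of $\PSL(2,q)$ is $q+1$, realized by the Borel, which follows from simplicity of $\PSL(2,q)$ and the known minimal faithful permutation degree; lifting to $\SL(2,q)$ the same index lower bound holds. Since the half-representations have dimension $(q \pm 1)/2 < q+1$ and the cuspidals have dimension $q-1 < q+1$, no induction from a proper subgroup can realize them, so they are all primitive and $\delta_n = \delta_n^{\natural}$ in the relevant ranges. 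The main obstacle is the case analysis underlying the $\ge 1/6$ bound (together with carefully tracking whether the central element $-I$ lies in $A \cup B$, which affects the $\delta$ contribution at the class $\{-I\}$); a secondary technicality is verifying the minimal-index statement for the handful of small exceptional $q$ (e.g., $q \in \{5, 7, 9, 11\}$) where exotic subgroups like $\SL(2,3) \subset \SL(2,5)$ appear and must be checked by hand.
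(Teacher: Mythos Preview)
Your approach is essentially the same as the paper's: both compute directly from the standard character tables of $\SL(2,q)$ and $\PSL(2,q)$, reducing the comparison of two $(q\pm 1)$-dimensional characters to counting agreements on the appropriate torus. Your parametrization by the orders $(d,e)$ of $a\pm b$ in the torus character group is equivalent to the paper's function $M_k(m,m')$, and your observation that $1/d + 1/e - 1/\mathrm{lcm}(d,e)$ is maximized at $(d,e)=(2,3)$ is exactly the content of the paper's Lemma~\ref{max-lem} and the bound \eqref{eq:max-bound}.

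One minor imprecision: you write that $(d,e)=(2,3)$ ``requires $6\mid q-1$'' and gives $\delta = 1/6$ exactly. The configuration exists whenever $6\mid q-1$, but to achieve $\delta=1/6$ one also needs $m,m'$ of the same parity (else the characters disagree at $[-I]$ and the unipotent classes), which forces $(q-1)/2$ even, i.e.\ $12\mid q-1$. This matches the paper's equality condition stated after the theorem. Since the theorem itself only asserts $\delta_{q\pm 1}\ge 1/6$, your argument suffices for that; you correctly flag the parity issue as something to track.

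Your primitivity argument via minimal subgroup index is a genuine addition: the paper simply asserts that the imprimitive irreducibles are exactly the $W_\alpha$, without justification. Your approach (using that the minimal index of a proper subgroup of $\PSL(2,q)$ is $q+1$ for $q\ge 13$, hence no representation of dimension $<q+1$ can be induced) is clean and correct; for the exceptional $q\in\{5,7,9,11\}$ where smaller-index subgroups exist, the dimensions $(q\pm 1)/2$ and $q-1$ are still not divisible by any available index, so the case check goes through.
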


We remark that we completely determine $\delta_{q \pm 1}(G)$ for $G=\SL(2,q)$
and $\PSL(2,q)$ in Section \ref{sec:sl2q}, but the exact formulas are a bit complicated and 
depend on divisibility conditions of $q \mp 1$.  In particular,  
$\delta_{q \pm 1}(\SL(2,q)) = \frac 16$ if and only if $12 | (q \mp 1)$,
and $\delta_{q \pm 1}(\PSL(2,q)) = \frac 16$ if and only if $24 | (q \mp 1)$.

The values for $\SL(2,q)$ immediately give the following bounds.

\begin{cor} $d_{(q\pm 1)/2}^\natural \leq \frac{2}{q}$ for $q$ any odd prime power greater than 3.
\end{cor}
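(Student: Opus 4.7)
The plan is to read the bound directly off Theorem \ref{thm2} by exhibiting $\SL(2,q)$ as the witness group in the infimum $d^\natural_n = \inf_G \delta^\natural_n(G)$. Specifically, for any single finite group $G$ one has $d^\natural_n \le \delta^\natural_n(G)$, and the first clause of Theorem \ref{thm2} asserts that $\delta^\natural_n(\SL(2,q)) = 2/q$ precisely when $q$ is odd and $n = (q\pm 1)/2$. Setting $n = (q\pm 1)/2$ therefore gives
\[
d^\natural_{(q\pm 1)/2} \;\le\; \delta^\natural_{(q\pm 1)/2}(\SL(2,q)) \;=\; \frac{2}{q}.
\]

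The only thing to check is that the relevant hypotheses of Theorem \ref{thm2} are met: $q$ must be odd (given) and $n = (q \pm 1)/2$ (which is the case under consideration), with no further divisibility condition required — unlike the more delicate situation for $n = q \pm 1$. The restriction $q > 3$ is harmless: it merely ensures $(q\pm 1)/2 \ge 2$, so that the corollary says something nontrivial beyond the already-known $d^\natural_1 = \tfrac{1}{2}$ recorded in Theorem \ref{thm1}.

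There is no genuine obstacle to overcome at the level of the corollary itself; it is a one-line formal consequence of the previous theorem. All of the substantive work — exhibiting, for each odd prime power $q$, two inequivalent irreducible primitive representations of $\SL(2,q)$ of dimension $(q\pm 1)/2$ whose characters agree on a $(1 - 2/q)$-proportion of the group — is absorbed into the proof of Theorem \ref{thm2}, which I would simply invoke.
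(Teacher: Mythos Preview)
Your proposal is correct and matches the paper's approach exactly: the paper states the corollary with the remark ``The values for $\SL(2,q)$ immediately give the following bounds,'' i.e., it is read off from Theorem~\ref{thm2} via $d^\natural_n \le \delta^\natural_n(\SL(2,q))$ just as you do.
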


Note the upper bound in the corollary for $q=7$ is the exact value of $d^\natural_3$.

Even though Theorem \ref{thm1} implies $d_n^\natural$ is not a decreasing sequence for
$n \ge 1$, this corollary at least suggests that $d_n^\natural \to 0$ as $n \to \infty$.

The proof of Theorem \ref{thm1} relies on consideration of various cases according to
the possible finite primitive subgroups of $\GL_2(\C)$ and $\GL_3(\C)$ which are ``minimal lifts'', and about half of these are of the form $\PSL(2,q)$ or $\SL(2,q)$ 
 for $q \in \{ 3, 5, 7, 9 \}$.  Thus Theorem \ref{thm2} is a generalization of one of the ingredients for Theorem \ref{thm1}.  However, most of
the work involved in the proof of Theorem \ref{thm1} is the determination of and
reduction to these minimal lifts, as described in Section \ref{sec:general}.

\subsection{Distinguishing $L$-functions}
Let $F$ be a number field, and consider an $L$-function $L(s)$, which is a meromorphic
function of a complex variable $s$ satisfying certain properties, principally having an
Euler product $L(s) = \prod L_v(s)$ where $v$ runs over all primes of $F$ for $s$ in some
right half-plane.  For almost all (all but finitely many)
$v$, we should have $L_v(s) = (p_v(q_v^{-s}))^{-1}$ where $q_v$ is the size of the residue
field of $F_v$ and $p_v$ a polynomial of a fixed degree $n$, which is the degree of the
$L$-function.

Prototypical  $L$-functions of degree $n$ are $L$-functions $L(s, \rho) = \prod L(s, \rho_v)$ 
of $n$-dimensional
Galois representations $\rho : \Gal(\bar F/F) \to \GL_n(\C)$ (or into $\GL_n(\overline{\Q_p})$) 
and $L$-functions $L(s,\pi) = \prod L(s, \pi_v)$ of automorphic representations $\pi$ of 
$\GL_n(\A_F)$.  In fact it is conjectured that all (nice) $L$-functions are automorphic.
These $L$-functions are local-global objects, and one can ask how many local factors
$L_v(s)$ determine $L(s)$.  

First consider the automorphic case: suppose 
$\pi, \pi'$ are irreducible cuspidal automorphic representations of $\GL_n(\A_F)$,
$S$ is a set of places of $F$ and we know that $L(s, \pi_v) = L(s, \pi'_v)$ for all
$v \not \in S$.
Strong multiplicity one says that if $S$ is finite, then $L(s, \pi) = L(s, \pi')$ (in fact, $\pi \simeq \pi'$).  Ramakrishnan \cite{ramakrishnan:motives} conjectured that if $S$ has density 
$< \frac 1{2n^2}$, then $L(s, \pi) = L(s, \pi')$, and this density bound would be sharp.  This is 
true when $n=1$, and Ramakrishnan also showed it when $n=2$ \cite{ramakrishnan:SMO}.

Recently, in \cite{walji} the second author showed that when $n=2$
one can in fact obtain stronger bounds under various assumptions, e.g.,   
the density bound $\frac 18$ from \cite{ramakrishnan:SMO} may be replaced by $\frac 14$ if 
one restricts to non-dihedral representations (i.e., not induced from quadratic
extensions) or by $\frac 29$ if the representations are not  twist-equivalent.

Our motivation for this project was to try to understand an analogue of \cite{walji} for
larger $n$.  However the analytic tools known for $\GL(2)$ that are used in \cite{walji}
are not known for larger $n$.  Moreover, the classification of $\GL(2)$ cuspidal 
representations into dihedral, tetrahedral, octahedral and icosahedral types has no known
nice generalization to $\GL(n)$.  So, as a proxy, we consider the case of Galois (specifically 
Artin) representations.  The strong Artin conjecture says that all Artin representations all 
automorphic, and Langlands' principle of functoriality says that whatever is true for Galois 
representations should  be true (roughly) for automorphic representations as well.

Consider $\rho, \rho'$ be irreducible $n$-dimensional Artin representations for $F$, i.e., 
irreducible $n$-dimensional continuous complex representations of the absolute Galois group 
$\Gal(\bar F/F)$ of $F$.   For almost all places $v$ of $F$, we can associate a well-defined Frobenius conjugacy class $\Fr_v$ of $\Gal(\bar F/F)$, and $L(s, \rho_v)$
determines the eigenvalues of $\rho(\Fr_v)$, and thus $\tr \rho(\Fr_v)$.
Let $S$ be a set of places of $F$, and suppose
$L(s, \rho_v) = L(s, \rho'_v)$, or even just $\tr \rho(\Fr_v) = \tr \rho'(\Fr_v)$, 
for all $v \not \in S$.

  Continuity means that $\rho$ and $\rho'$ factor through
a common finite quotient $G = \Gal(K/F)$ of $\Gal(\bar F/F)$, for some finite normal 
extension $K/F$.  View $\rho, \rho'$ as irreducible $n$-dimensional representations of
the finite group $G$.    The Chebotarev density theorem tells us that if
$C$ is a conjugacy class in $G$, then the image of $\Fr_v$ in $\Gal(K/F)$
lies in $C$ for a set of primes $v$ of density $\frac{|C|}{|G|}$.  This implies that if 
the density of $S$ is $< \delta_n(G)$ (or $< \delta_n^\natural(G)$ if $\rho, \rho'$ are primitive), then $\rho \simeq \rho'$, i.e., $L(s, \rho) = L(s, \rho')$.  
Moreover, this bound on the density of $S$ is sharp.

Consequently, Proposition \ref{prop1} tells us that if the density of $S$ is $< \frac 1{2n^2}$,
then $L(s, \rho) = L(s, \rho')$, and Serre's result implies this bound is sharp.  
(See Rajan \cite{rajan} for an analogous result on $\ell$-adic Galois representations.) 
In fact, this application to Galois representations was Serre's motivation, and it also 
motivated the bound in Ramakrishnan's conjecture. 
For us, the Chebotarev density theorem together with Theorem \ref{thm1} yields

\begin{cor} Let $\rho$, $\rho'$ be  
irreducible primitive $n$-dimensional Artin representations for $F$.  Suppose
$\tr \rho(\Fr_v) = \tr \rho'(\Fr_v)$ for a set of primes $v$ of $F$ of density $c$.

\begin{enumerate}
\item If $n=2$ and $c > \frac 34$, then $\rho \simeq \rho'$.

\item If $n=3$ and $c > \frac 57$, then $\rho \simeq \rho'$.
\end{enumerate}
\end{cor}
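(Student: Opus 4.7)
The plan is to reduce the statement directly to Theorem \ref{thm1} via the Chebotarev density theorem, following the template sketched in the text just preceding the corollary. First, since $\rho$ and $\rho'$ are continuous Artin representations, they factor through a common finite quotient $G = \Gal(K/F)$ for some finite Galois extension $K/F$, and I would view them as irreducible $n$-dimensional representations of $G$. A short check shows that primitivity of $\rho, \rho'$ as Galois representations is equivalent to primitivity of the induced representations of $G$, because induction commutes with pullback along the surjection $\Gal(\bar F/F) \twoheadrightarrow G$ and closed finite-index subgroups of $\Gal(\bar F/F)$ containing $\Gal(\bar K/F)$ correspond to subgroups of $G$.

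Next, since $\tr \rho$ and $\tr \rho'$ are class functions on $G$, the set where they disagree is a union of conjugacy classes whose total measure is exactly $\delta(\rho, \rho')$. By Chebotarev, the density of primes $v$ with $\Fr_v$ in a given conjugacy class $C$ equals $|C|/|G|$, so summing over the disagreement classes shows that the density of primes with $\tr \rho(\Fr_v) \ne \tr \rho'(\Fr_v)$ equals $\delta(\rho, \rho')$. Equivalently, the density of primes at which the traces agree is exactly $1 - \delta(\rho, \rho')$; in our setup this density is at least $c$.

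I would then argue by contradiction. If $\rho \not\simeq \rho'$, then $\rho, \rho'$ are inequivalent irreducible primitive representations of $G$, and hence
\[ \delta(\rho,\rho') \;\ge\; \delta^\natural_n(G) \;\ge\; d^\natural_n. \]
By Theorem \ref{thm1}, $d^\natural_2 = \tfrac14$ and $d^\natural_3 = \tfrac27$, so $c \le 1 - \delta(\rho,\rho') \le \tfrac34$ when $n=2$ and $c \le \tfrac57$ when $n=3$, contradicting the hypothesis in each case. There is no genuine obstacle once Theorem \ref{thm1} is in hand; the only subtle point worth stating carefully is the matching of the two notions of primitivity, after which the corollary is an immediate consequence of Chebotarev.
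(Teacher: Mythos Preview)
Your proof is correct and follows exactly the approach the paper indicates: the corollary is stated as an immediate consequence of the Chebotarev density theorem together with Theorem~\ref{thm1}, and your argument spells out precisely that deduction. The only additional care you take beyond the paper's one-line justification is the remark on matching the two notions of primitivity, which is appropriate and correct.
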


When $n=2$, if $\rho$ and $\rho'$ are automorphic, i.e., satisfy the strong Artin
conjecture, then the above result already follows by \cite{walji}. When
$n=2$, the strong Artin conjecture for $\rho$ is known in many cases---for instance,
 if $\rho$ has solvable image by Langlands \cite{langlands}
and Tunnell \cite{tunnell}, or if $F=\Q$ and $\rho$ is ``odd'' via Serre's conjecture
by Khare-Wintenberger \cite{khare-wintenberger}.
We remark that the methods of \cite{walji} are quite different than ours here.

The above corollary suggests the following statement may be true: if $\pi, \pi'$
are cuspidal automorphic representations of $\GL_3(\A_F)$ which are not induced from
characters and $L(s, \pi_v) = L(s, \pi'_v)$ for a set of primes $v$ of density $> \frac 57$,
then $\pi \simeq \pi'$.  Since not all cuspidal $\pi, \pi'$ come from Artin representations,
the $\frac 57$ bound is not even conjecturally sufficient for general $\pi, \pi'$.
However, it seems reasonable to think that coincidences of a large fraction of Euler factors only 
happen for essentially algebraic reasons, so the density bounds are likely to be the same
in both the Artin and automorphic cases.

\subsection*{Acknowledgements}
We thank a referee for pointing out an error in an earlier version.
The first author was partially supported by a Simons Collaboration Grant.
The second author was supported by Forschungskredit grant K-71116-01-01 of the
University of Z\"urich and partially supported by grant SNF PP00P2-138906 of the
Swiss National Foundation.
This work began when the second author visited the first at the University of Oklahoma. The second author would like to thank the first author as well as the mathematics department of the University of Oklahoma for their hospitality. 

\section{Notation and Background}

Throughout, $G$, $H$ and $A$ will denote finite groups, and $A$ will be abelian.
Denote by $Z(G)$ the center of $G$.

If $G$ and $N$ are groups, by a (group) extension of $G$ by $N$ we mean a group $H$
with a normal subgroup $N$ such that $H/N \simeq G$.  The extension is called central
or cyclic if $N$ is a central or cyclic subgroup of $H$.

If $G$, $H$, and $Z$ are groups such that $Z \subset Z(G) \cap
Z(H)$, then the central product $G \times_Z H$ of $G$ and $H$ with respect to $Z$ is defined to be direct product $G \times H$ modulo the central subgroup $\{ (z, z) : z \in Z \}$.

If $\chi_1, \chi_2$ are characters of $G$, their inner product is $(\chi_1, \chi_2) = |G|^{-1} \sum_G \chi_1(g) 
\overline{\chi_2(g)}$. 

We denote a cyclic group of order $m$ by $C_m$.

\subsection{Finite subgroups of $\GL_n(\C)$}

Next we recall some definitions and facts about finite subgroups of $\GL_n(\C)$.

Let $G$ be a finite subgroup of $\GL_n(\C)$, so one has the standard representation
of $G$ on $V = \C^n$.  
We say $G$ is reducible if there exists a nonzero proper subspace 
$W \subset V$ which is fixed by $G$.

Suppose $G$ is irreducible.  Schur's lemma implies that $Z(G) \subset Z(\GL_n(\C))$.
In particular, $Z(G)$ is cyclic.  If there exists a nontrivial decomposition 
$V = W_1 \oplus \cdots \oplus W_k$ such that $G$ acts transitively on the $W_j$, then we
say $G$ is imprimitive.  In this case, each $W_j$ has the same dimension, and
the standard representation is induced from a representation on $W_1$.  Otherwise, call $G$
primitive.

Let $A \mapsto \bar A$ denote the quotient map from $\GL_n(\C)$ to $\PGL_n(\C)$.  Similarly,
if $G \subset \GL_n(\C)$, let $\bar G$ be the image of $G$ under this map.  
We call the projective image $\bar G$
irreducible or primitive if $G$ is. Finite
subgroups of $\PGL_n(\C)$ have been classified for small $n$, and we can use this to
describe the finite subgroups of $\GL_n(\C)$.  

Namely, suppose $G \subset \GL_n(\C)$ is irreducible.  Then $Z(G)$ is a cyclic subgroup
of scalar matrices, and $\bar G = G/Z(G)$.  Hence the irreducible finite subgroups of 
$\GL_n(\C)$, up to isomorphism, are a subset of the set of 
finite cyclic central extensions of the irreducible subgroups $\bar G$ of $\PGL_n(\C)$.  

Let $H$ be an irreducible subgroup of $\PGL_n(\C)$.
Given one cyclic central extension $G$ of $H$ which embeds (irreducibly) in $\GL_n(\C)$,
note that the central product $G \times_{Z(G)} C_m$ also does for any cyclic group 
$C_m \supset Z(G)$, and has the same projective image as $G$.  
(Inside $\GL_n(\C)$, this central product just corresponds to adjoining more scalar
matrices to $G$.)  Conversely, if $G \times_{Z(G)} C_m$ is an irreducible subgroup of 
$\GL_n(\C)$, so is $G$.
We say $G$ is a minimal lift  of $H$ to $\GL_n(\C)$ if $G$ is an irreducible subgroup of 
$\GL_n(\C)$ with $\bar G \simeq H$ such that $G$ is not isomorphic to
$G_0 \times_{Z(G_0)} C_m$ for any proper subgroup $G_0$ of $G$.

\subsection{Serre's construction}
\label{sec:serre}

Here we explain the proof of Proposition \ref{prop1}
and describe Serre's construction.  

Suppose $\chi_1$ and $\chi_2$ are two distinct irreducible degree $n$ characters of 
a finite group $G$.
Let $Y$ be the set of elements of $G$ such that $\chi_1(g) \ne \chi_2(g)$.  Then we have
\[ |G| ( (\chi_1, \chi_1) - (\chi_1, \chi_2) ) =
\sum_Y \chi_1(g) (\overline{\chi_1(g)} - \overline{\chi_2(g)}). \]
Using the bound $|\chi_i(g)| \le n$ for $i=1,2$ and orthogonality relations, we see
\[ |G| = |G| ( (\chi_1, \chi_1) - (\chi_1, \chi_2) )
\le 2n^2 |Y|. \]
This proves Proposition \ref{prop1}.

\medskip
We now recall Serre's construction proving Theorem \ref{thm:serre}, which 
is briefly described in \cite{ramakrishnan:motives} using observations from \cite[Sec 6.5]{serre}.

Let $H$ be an irreducible subgroup of $\GL_n(\C)$, containing $\zeta I$ for each $n$-th root
of unity $\zeta$, such that $\bar H$ has order $n^2$.  
This means that $H$ is of ``central type'' with cyclic center.  Such $H$ exist for all $n$.
For instance, one can take $\bar H = A \times A$, where $A$ is an abelian group of order 
$n$.  Some nonabelian examples of such
$\bar H$ are given by Iwahori and Matsumoto \cite[Sec 5]{iwahori-matsumoto}.
Iwahori and Matsumoto
conjectured that groups of central type are necessarily solvable and this was proved
using the classification of finite simple groups by Howlett and Isaacs \cite{howlett-isaacs}.

Since $|H| = n^3$ and $|Z(H)| = n$, the identity $\sum |\tr h|^2 = |H|$
implies $\tr h = 0$ for each $h \in H \setminus Z(H)$, i.e., 
the set of $h \in H$ such that $\tr h = 0$ has cardinality  $n^3-n = (1 - \frac 1{n^2}) |H|$. 

Let $G = H \times \{ \pm 1 \}$ and consider the representations of $G$
given by  $\rho = \tau \otimes 1$ and $\rho' = \tau \otimes
\text{sgn}$, 
where $\tau$ is the standard representation of $H$ and $\text{sgn}$
is the nontrivial character of $\{ \pm 1 \}$.  Then $\tr \rho(g) = \tr \rho'(g) = 0$
for $2(n^3-n) = (1 - \frac 1{n^2} ) |G|$ elements of $G$.  On the remaining $2n$ elements
of $Z(G)$, $\tr \rho$ and $\tr \rho'$ must differ on precisely
$n$ elements, giving $G$ with $\delta_n(G) = \frac 1{2n^2}$ as desired.

Finally that $\rho$ and $\rho'$ so constructed are induced for $n > 1$.  
It suffices to show $\tau$ is induced.
Since $\bar H$ is solvable, there is a subgroup of prime index $p$, so there exists
a subgroup $K$ of $H$ of index $p$ which contains $Z=Z(H)$.  Put $\chi = \tr \tau$. 
Now $\sum_K |\chi(k)|^2
= \sum_Z |\chi(k)|^2 = |H|$.  On the other hand $\sum_K |\chi(k)|^2 \le
\sum_{i=1}^r \sum_K |\psi_i(k)|^2 = r |K|$, where $\chi |_K = \psi_1 + \cdots + \psi_r$
is the decomposition of $\chi|_K$ into irreducible characters of $K$.  Thus $r \ge p$
and we must have equality, which means $\tau$ is induced from a $\psi_i$.
We note that, more generally, Christina Durfee informed us of a proof that $\rho$, $\rho'$ must be induced if $\delta(\rho, \rho') = \frac 1{2n^2}$.  

\section{General Methods} \label{sec:general}

\subsection{Central extensions and minimal lifts}
The first step in the proof of Theorem \ref{thm1} is the determination of the minimal
lifts of irreducible finite subgroups of $\PGL_2(\C)$ and $\PGL_3(\C)$.  Here we explain
our method for this.

Let $G$ be a group and $A$ an additive abelian group, which we view as a $G$-module with
trivial action.  Then a short exact sequence of groups
\begin{equation} \label{eq:ses}
 0 \to A \overset{\iota}{\to} H \overset{\pi}{\to} G \to 1,
\end{equation}
where $\iota$ and $\pi$ are homomorphisms, such that $\iota(A) \subset Z(H)$
gives a central extension $H$ of $G$ by $A$.  Let $M(G,A)$ be the set of such sequences.
(Note these sequences are often called central extensions, but for our purpose it makes sense to call the middle term $H$ the central extension.)
We say two sequences in $M(G,A)$ are equivalent if there is a map $\phi$ that makes
this diagram commute:
\begin{equation} \label{eq:cd}
  \raisebox{-0.5\height}{\includegraphics{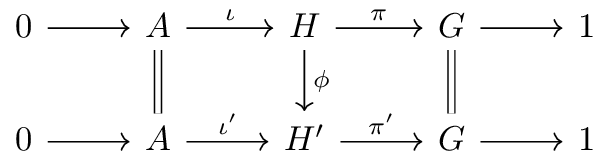}}
\end{equation}
Let $\~M(G,A)$ be $M(G,A)$ modulo equivalence.  

If two sequences in $M(G,A)$ as above are equivalent, then $H \simeq H'$.  
However the converse is not true.  E.g., taking $G\simeq A \simeq C_p$, then
$|\~M(G,A)| = p$ but there are only two isomorphism classes of central extensions
of $C_p$ by itself, namely the two abelian groups of order $p^2$.

Let $\Cent(G,A)$ be the set of isomorphism classes of central extensions of $G$ by $A$.
Then the above discussion shows we have a surjective but not necessarily
injective map $\Phi : \~M(G,A) \to \Cent(G,A)$
induced from sending a sequence as in \eqref{eq:ses} to the isomorphism class of $H$.

Viewing $A$ as a trivial $G$-module, we have a bijection between $\~M(G,A)$ and 
$H^2(G,A)$, with the class $0 \in H^2(G,A)$ corresponding to all split sequences in 
$M(G,A)$. 
We can use this to help determine minimal lifts of irreducible subgroups of 
$\PGL_n(\C)$.  We recall $H_1(G,\Z)$ is the abelianization of $G$, and $H_2(G,\Z)$
is the Schur multiplier of $G$.

\begin{prop} \label{prop:31}
 Let $G$ be an irreducible subgroup of $\PGL_n(\C)$.
Then any minimal lift of $G$ to $\GL_n(\C)$ is a central extension of $G$ by $C_m$ 
for some divisor $m$ of the exponent of  $H_1(G,\Z) \times H_2(G,\Z)$.
\end{prop}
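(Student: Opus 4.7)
The plan is to realize a minimal lift $\tilde G$ of $G$ as a central extension
\[ 1 \to C_m \to \tilde G \to G \to 1 \]
classified by a cohomology class $c \in H^2(G, C_m)$, then show that minimality forces the order of $c$ in $H^2(G, C_m)$ to be exactly $m$, and finally bound this order via the universal coefficient theorem.

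First I would set up: since $\tilde G$ is an irreducible finite subgroup of $\GL_n(\C)$, Schur's lemma makes $Z(\tilde G)$ a cyclic group of scalar matrices, say $C_m$, and $\tilde G / Z(\tilde G) \simeq G$ yields the extension class $c$.

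The crucial step is to prove that minimality forces the order of $c$ in $H^2(G, C_m)$ to equal $m$. I would argue contrapositively: if $c$ has order $d$ strictly less than $m$, then by modifying a set-theoretic section $s : G \to \tilde G$ by a suitable $1$-cochain one can arrange the associated $2$-cocycle $\phi(g_1, g_2) = s(g_1) s(g_2) s(g_1 g_2)^{-1}$ to take values in the unique subgroup $C_d \subset C_m$. Set $\tilde G_0 := \langle s(g) : g \in G \rangle$. Since $\tilde G_0$ projects onto $G$, it has the same projective image as $\tilde G$ and is therefore an irreducible subgroup of $\GL_n(\C)$; the cocycle condition forces $\tilde G_0 \cap \C^* I = C_{d'}$ for some $d' \mid d$, so $Z(\tilde G_0) = C_{d'}$. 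Then $\tilde G = \tilde G_0 \cdot C_m$ inside $\GL_n(\C)$, with $\tilde G_0 \cap C_m = C_{d'}$, yielding an identification $\tilde G \simeq \tilde G_0 \times_{Z(\tilde G_0)} C_m$ with $\tilde G_0$ a proper subgroup of $\tilde G$, contradicting minimality.

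Finally, I would invoke the universal coefficient theorem, which supplies a (non-canonically) split exact sequence
\[ 0 \to \mathrm{Ext}^1(H_1(G,\Z), C_m) \to H^2(G, C_m) \to \mathrm{Hom}(H_2(G,\Z), C_m) \to 0. \]
The Hom summand is killed by $\exp H_2(G,\Z)$, and the structure theorem for finite abelian groups together with $\mathrm{Ext}^1(C_a, C_b) \simeq C_{\gcd(a,b)}$ shows the Ext summand is killed by $\exp H_1(G,\Z)$. Hence $\exp H^2(G, C_m)$ divides $\exp(H_1(G,\Z) \times H_2(G,\Z))$, so the order $m$ of $c$ does too.

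The main obstacle will be the careful construction of $\tilde G_0$ as a genuine subgroup of $\tilde G$ inside $\GL_n(\C)$, rather than merely as an abstract central extension: one must verify that a cocycle representative of $c$ can indeed be chosen to land in $C_d$, and that the resulting subgroup $\tilde G_0$ realizes the central product decomposition $\tilde G \simeq \tilde G_0 \times_{Z(\tilde G_0)} C_m$ on the nose within $\GL_n(\C)$. Once this bookkeeping is in place, the universal coefficient theorem delivers the divisibility immediately.
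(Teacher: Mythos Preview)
Your argument has a genuine gap at the crucial step: the claim that if $c \in H^2(G, C_m)$ has order $d < m$, then $c$ admits a cocycle representative with values in the subgroup $C_d \subset C_m$. This amounts to asserting that $c$ lies in the image of the inclusion-induced map $H^2(G, C_d) \to H^2(G, C_m)$, and that implication fails in general. For a concrete obstruction, take $G = C_p$ and $m = p^2$: then $H^2(C_p, C_{p^2}) \cong C_p$, so every class has order dividing $p$; yet using $H^2(C_p, A) \cong A/pA$ one checks that the inclusion $C_p \hookrightarrow C_{p^2}$ induces the \emph{zero} map $H^2(C_p, C_p) \to H^2(C_p, C_{p^2})$. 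Thus a generator $c$ has order $p$ but no $C_p$-valued cocycle representative. Correspondingly, the extension $C_{p^3}$ of $C_p$ by $C_{p^2}$ has class of order $p < p^2 = m$, yet is \emph{not} isomorphic to $G_0 \times_{Z(G_0)} C_{p^2}$ for any proper subgroup $G_0$. So ``order of $c$ less than $m$'' does not force non-minimality, and your contrapositive breaks down. (This is not a counterexample to the proposition itself, since $C_p$ has no irreducible lift to $\GL_n(\C)$ for $n>1$; but your argument never uses irreducibility beyond cyclicity of the center, so it cannot be complete as written. You even flag this step as ``the main obstacle'' --- and indeed it is: the obstacle is real, not just bookkeeping.)

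The paper proceeds differently. Rather than analyzing the order of the class $c$, it sets $m' = \gcd\bigl(m, \exp(H_1(G,\Z) \times H_2(G,\Z))\bigr)$ and argues directly that the central-product construction $H \mapsto H \times_{C_{m'}} C_m$ gives a bijection $\tilde M(G, C_{m'}) \to \tilde M(G, C_m)$: the universal coefficient theorem forces $|H^2(G, C_m)| = |H^2(G, C_{m'})|$, and the paper argues that central-product is injective on equivalence classes, hence surjective. The conclusion is then that for $m > m'$ every extension by $C_m$ is a central product of a $C_{m'}$-extension with $C_m$, hence not minimal. This sidesteps entirely the question of the order of $c$ and of choosing a cocycle representative in a prescribed subgroup.
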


\begin{proof} Any lift of $G$ to an irreducible subgroup $H \subset \GL_n(\C)$ corresponds
to an element of $\Cent(G,A)$ where $A=C_m$ for some $m$, and thus corresponds to
at least one element of $H^2(G,A)$.
The universal coefficients theorem gives us the exact sequence
\begin{equation} \label{eq:uct}
 0 \to \mathrm{Ext}(H_1(G,\Z), A) \to H^2(G, A) \to \mathrm{Hom}(H_2(G,\Z), A) \to 0.
\end{equation}
Let $m'$ be the gcd of $m$ with the exponent of $H_1(G,\Z) \times H_2(G,\Z)$. 
Recall that $\mathrm{Ext}(\bigoplus \Z/n_i \Z, A) = \bigoplus A/n_i A$, 
so $\mathrm{Ext}(H_1(G,\Z), C_m) = \mathrm{Ext}(H_1(G,\Z), C_{m'})$.  
An analogous statement is true for $\mathrm{Hom}(H_2(G,\Z), -)$ so 
$|H^2(G, C_m)| = |H^2(G, C_{m'})|$.  

Assume $m \ne m'$.  Consider a sequence as in \eqref{eq:ses} with $A = C_{m'}$.
This gives a sequence
\[ 0 \to C_m \to H \times_{C_{m'}} C_m \to G \to 1 \]
in $M(G,C_m)$ by extending $\iota : C_{m'} \to H$ to be the identity on $C_m$.  
Note if one has an equivalence $\phi$ of two sequences in $M(G, C_m)$ constructed
in this way, then commutativity implies $\phi(H) = H$ so restricting
the isomorphism $\phi$ on the middle groups to $H$ yields and equivalence of the corresponding
sequences in $M(G, C_{m'})$.  Hence all elements of $\~M(G, C_m)$ arise from 
``central products'' of sequences in $M(G, C_{m'})$, and thus no elements of 
$\Cent(G, C_m)$ can be minimal lifts.
\end{proof}

When $H_1(G,\Z) \times H_2(G, \Z) \simeq 1$, then $H^2(G,A) = 0$ for any abelian group
$A$, which means all central extensions are split, i.e., $\Cent(G, A) = \{ G \times A \}$ for
any $A$.  When $H_1(G,\Z) \times H_2(G, \Z) \simeq \Z/2\Z$, then \eqref{eq:uct}
tells us that $|H^2(G, C_m)|$ has size 1 or 2 according to whether $m$ is odd
or even, so there must be a unique nonsplit extension $\~ G \in \Cent(G,C_2)$.  Then
the argument in the proof tells us any cyclic central extension of $G$ is a central product
of either $G$ or $\~G$ with a cyclic group.
 
However, in general, knowing $H_1(G,\Z)$ and $H_2(G, \Z)$ is not enough to determine the
size of $\Cent(G, C_m)$.  When $|\Cent(G, C_m)| < |H^2(G, C_m)|$, we will sometimes
need a way to verify that the central extensions of $G$ by $C_m$ we exhibit exhaust all
of $\Cent(G, C_m)$.  For this, we will use a lower bound on the size of the fibers of $\Phi$, 
i.e., a lower bound on the number of classes in $\~M(G, A)$ a given central extension $H \in \Cent(G,A)$ appears in.

The central automorphisms of a group $H$ with center $Z$, denoted $\Aut_Z(H)$,
are the automorphisms $\sigma$ of $H$ which commute with the projection $H \to H/Z$,
i.e., satisfy $\sigma(h)h^{-1} \in Z$ for all $h \in H$.

\begin{prop}  \label{prop:32}
Let $A$ be abelian and $H \in \Cent(G,A)$ such that
$A = Z:= Z(H)$.  Then 
$|\Phi^{-1}(H)| \ge \frac{|\Aut(Z)|}{|\Aut_Z(H)|}$.  Moreover, if $H$ is perfect, then
$|\Phi^{-1}(H)| \ge |\Aut(Z)|$.
\end{prop}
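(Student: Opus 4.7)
The plan is to exhibit an explicit family of sequences in $\tilde M(G,A)$ mapping to $H$ under $\Phi$, indexed by $\Aut(A)$, and to count how many equivalence classes they produce. Since $H \in \Cent(G,A)$, we may fix one sequence $s_0: 0 \to A \xrightarrow{\iota_0} H \xrightarrow{\pi_0} G \to 1$ representing it. Because we are assuming $\iota_0(A) = Z(H) = Z$, the map $\iota_0$ is an isomorphism $A \xrightarrow{\sim} Z$. For each $\alpha \in \Aut(A) = \Aut(Z)$, define $s_\alpha$ by $\iota_\alpha = \iota_0 \circ \alpha$ and $\pi_\alpha = \pi_0$; each $s_\alpha$ lies in $M(G,A)$ and maps to $H$ under $\Phi$. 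The bound will follow by showing that few $\alpha$ give equivalent sequences.

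Next I would analyze when $s_\alpha \sim s_{\alpha'}$. By the defining diagram \eqref{eq:cd}, this requires $\phi \in \Aut(H)$ with $\pi_0 \circ \phi = \pi_0$ and $\phi \circ \iota_0 \circ \alpha = \iota_0 \circ \alpha'$. The first condition says exactly that $\phi \in \Aut_Z(H)$, and the second says that via the identification $\iota_0: A \xrightarrow{\sim} Z$, the restriction $\phi|_Z$ equals $\alpha' \alpha^{-1}$. Thus the map $\alpha \mapsto [s_\alpha]$ factors through the quotient of $\Aut(Z)$ by the image of the restriction homomorphism $r: \Aut_Z(H) \to \Aut(Z)$, and in fact is in bijection with the set of cosets of $\mathrm{im}(r)$. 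Since $|\mathrm{im}(r)| \le |\Aut_Z(H)|$, we conclude
\[
|\Phi^{-1}(H)| \ \ge\ \frac{|\Aut(Z)|}{|\mathrm{im}(r)|} \ \ge\ \frac{|\Aut(Z)|}{|\Aut_Z(H)|},
\]
yielding the first claim.

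For the second claim, it suffices to show that $H$ perfect forces $\Aut_Z(H) = 1$, so that $\mathrm{im}(r) = 1$ and the bound improves to $|\Aut(Z)|$. Given $\phi \in \Aut_Z(H)$, define $f: H \to Z$ by $f(h) = h^{-1}\phi(h)$. A short computation using centrality of $Z$ shows $f$ is a homomorphism. Moreover, for any commutator one has $\phi([h_1,h_2]) = [\phi(h_1), \phi(h_2)] = [h_1 f(h_1), h_2 f(h_2)] = [h_1, h_2]$, since the $f(h_i)$ are central. Hence $\phi$ fixes $[H,H]$ pointwise; if $H$ is perfect, $[H,H] = H$ and so $\phi = \mathrm{id}$.

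The main subtlety is the bookkeeping in the second paragraph, i.e., verifying that two sequences $s_\alpha$, $s_{\alpha'}$ produced this way are genuinely inequivalent unless $\alpha' \alpha^{-1}$ lies in $\mathrm{im}(r)$, and that every such $\alpha' \alpha^{-1}$ does give an equivalence; both directions follow from the characterization of $\phi$ above. Nothing else in the argument is delicate: the family $\{s_\alpha\}$ is transparent, and the perfect case reduces cleanly to the standard fact that central automorphisms act trivially on the derived subgroup.
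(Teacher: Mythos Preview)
Your argument is correct and follows essentially the same route as the paper's proof: both fix $\pi$ and produce $|\Aut(Z)|$ sequences by varying $\iota$, then bound how many can fall into a single equivalence class via $\Aut_Z(H)$. Your version is slightly more explicit in that you introduce the restriction map $r:\Aut_Z(H)\to\Aut(Z)$ and obtain the intermediate bound $|\Aut(Z)|/|\mathrm{im}(r)|$, whereas the paper argues more tersely that distinct $\iota'$ in the same class force distinct $\phi\in\Aut_Z(H)$.

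The one genuine difference is in the perfect case. The paper invokes a result of Adney and Yen that $|\Aut_Z(H)|=|\mathrm{Hom}(H,Z)|$ when $H$ has no abelian direct factor, from which $\Aut_Z(H)=1$ for $H$ perfect is immediate. Your direct computation---observing that $\phi(h)=h\cdot f(h)$ with $f(h)$ central forces $\phi$ to fix all commutators---is more elementary and self-contained, avoiding the external citation; the Adney--Yen result is of course more general but unnecessary here.
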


Recall $H$ being perfect means $H$ equals its derived group, i.e., $H_1(H,\Z) = 0$.
In particular, non-abelian simple groups are perfect.
By \eqref{eq:uct}, central extensions of perfect groups are simpler to study. 
In fact a perfect group $H$ possesses a universal central extension by $H_2(H,\Z)$.

\begin{proof}
Consider a commuting diagram of sequences as in
\eqref{eq:cd} with $H'=H$.  Suppose $\pi = \pi'$, which forces $\phi \in \Aut_Z(H)$
and $\iota'(A) = \ker \pi =  \iota(A)$.  Fixing $\pi$ and $\iota$, 
there are $|\Aut(Z)|$ choices for $\iota'$, which gives $|\Aut(Z)|$ elements of $M(G,A)$.
Each different $\iota'$ must induce a different central
automorphism $\phi \in \Aut_Z(H)$.  Thus at most $|\Aut_Z(H)|$ of these 
$|\Aut(Z)|$ bottom sequences can lie in the same
equivalence class, which proves the first statement.

Adney and Yen \cite{adney-yen} showed $|\Aut_Z(H)| = |\mathrm{Hom}(H,Z)|$
when $H$ has no abelian direct factor.  Consequently, $\Aut_Z(H) = 1$ when $H$ is perfect.
\end{proof}

\subsection{Reduction to minimal lifts} \label{sec:reduction}

Let $G$ be a finite group and $\rho_1, \rho_2$ be two inequivalent irreducible
representations of $G$ into $\GL_n(\C)$.  Let $N_i = \ker \rho_i$ and $G_i = \rho_i(G)$
for $i = 1, 2$.  We want to reduce the problem of finding lower bounds for $\delta(\rho_1,
\rho_2)$ to the case where $G_1$ and $G_2$ are minimal lifts of $\bar G_1$ and 
$\bar G_2$.  Note that $\delta(\rho_1, \rho_2)$ is unchanged if we factor through the 
common kernel $N_1 \cap N_2$, so we may assume $N_1 \cap N_2 = 1$.
Then $N_1 \times N_2$ is a normal subgroup of $G$, $N_1 \simeq \rho_2(N_1) \lhd G_2$
and $N_2 \simeq \rho_1(N_2) \lhd G_1$.

Write $G_i = H_i \times_{Z(H_i)} Z_i$ for $i=1, 2$, where $H_i$ is a minimal lift of
$\bar G_i$ to $\GL_n(\C)$ and $Z_i$ is a cyclic group containing $Z(H_i)$.

For a subgroup $H$ of $\GL_n(\C)$, let $\alpha_n(H)$ be the minimum of
$\frac{ | \{ h \in H : \tr h \ne 0 \} | }{|H|}$ as one ranges over all embeddings (i.e., faithful
$n$-dimensional representations) of $H$ in $\GL_n(\C)$.

\begin{lemma} \label{lem:34}
Let $m = |\rho_1(N_2) \cap Z(G_1)|$.  
Then $\delta(\rho_1, \rho_2) \ge \frac{m-1}{m} \alpha_n(H_1)$. 
\end{lemma}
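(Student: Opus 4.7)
The plan is to exploit the fact that $\chi_2$ is constant on cosets of $N_2$, while on cosets contained in $N_2$ the map $\rho_1$ sends scalar-valued elements to a set of $m$ distinct scalars, forcing $\chi_1$ to take many distinct values on such cosets. Combining these observations partitions $G$ into pieces on which we get good lower bounds for the fraction of disagreement.

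First, since $N_1 \cap N_2 = 1$, the map $\rho_1$ is injective on $N_2$. Let $M = \rho_1(N_2) \cap Z(G_1)$, a cyclic group of order $m$ consisting of scalar matrices (using that $G_1 \subset \GL_n(\C)$ is irreducible, so central elements are scalars), and let $K = (\rho_1|_{N_2})^{-1}(M) \subset N_2$. Injectivity of $\rho_1|_{N_2}$ ensures that $\rho_1 : K \to M$ is a bijection; in particular $|K|=m$. Write $\rho_1(k) = \lambda_k I$ for $k \in K$, so that the $\lambda_k$ run through $M$ as $k$ runs through $K$. Next I would partition $G$ into left cosets of $K$. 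For $g \in G$ and $k \in K$, since $K \subset N_2$, we have $\chi_2(gk) = \chi_2(g)$, whereas
\[
\chi_1(gk) = \tr(\rho_1(g)\rho_1(k)) = \lambda_k \chi_1(g).
\]
If $\chi_1(g) = 0$, then $\chi_1 \equiv 0$ on the entire coset $gK$ and the coset contributes nothing. If $\chi_1(g) \ne 0$, then as $k$ varies over $K$, the values $\lambda_k \chi_1(g)$ are $m$ pairwise distinct complex numbers, at most one of which equals the constant value $\chi_2(g)$. Thus at least $m-1$ of the $m$ elements of $gK$ satisfy $\chi_1 \ne \chi_2$.

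Summing over cosets yields
\[
|\{ g \in G : \chi_1(g) \ne \chi_2(g) \}| \ \ge\ \frac{m-1}{m}\,|\{ g \in G : \chi_1(g) \ne 0 \}|.
\]
It remains to bound the last quantity below by $\alpha_n(H_1) |G|$. Since $\rho_1 : G \twoheadrightarrow G_1$ has kernel of constant size, the fraction of $g \in G$ with $\chi_1(g) \ne 0$ equals the fraction of $g_1 \in G_1$ with $\tr g_1 \ne 0$. Using the presentation $G_1 = H_1 \times_{Z(H_1)} Z_1$, every element is of the form $h z$ with $z$ a scalar, so $\tr(hz) = z \tr(h)$ vanishes iff $\tr(h) = 0$; a short count then shows this fraction equals the fraction of $h \in H_1$ with $\tr h \ne 0$, which is at least $\alpha_n(H_1)$ by definition. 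Combining gives $\delta(\rho_1,\rho_2) \ge \tfrac{m-1}{m}\alpha_n(H_1)$.

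The argument is mostly bookkeeping once the correct subgroup $K$ and partition are identified; the one place requiring care is verifying the central-product computation so that passing from $G$ to $G_1$ to $H_1$ does not lose any information about the zero-trace statistics. The key conceptual step is the realization that the scalar subgroup $M$ inside $\rho_1(N_2)$ lets one play off the invariance of $\chi_2$ along $N_2$-cosets against the cyclic scalar multiplication action on $\chi_1$ values.
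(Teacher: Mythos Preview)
Your proof is correct and follows essentially the same approach as the paper: you define the same subgroup $K$ (the paper writes it as $N_2 \cap \rho_1^{-1}(Z(G_1))$), partition $G$ into cosets $gK$, and use that $\chi_2$ is constant on each coset while $\chi_1$ is scaled by the $m$ distinct roots of unity in $M$. You are somewhat more explicit than the paper about the injectivity of $\rho_1|_{N_2}$ and the central-product count reducing the zero-trace fraction from $G_1$ to $H_1$, but these are exactly the steps the paper leaves implicit.
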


\begin{proof} Let $K = N_2 \cap \rho_1^{-1}(Z(G_1))$, so $\rho_1(K)$ is a cyclic subgroup
of $Z(G_1)$ of order $m$ and $\rho_2(K) = 1$.  Fix any $g \in G$.  Then as $k$ ranges over
$K$, $\tr \rho_1(gk)$ ranges over the values $\zeta \tr \rho(g)$, where $\zeta$ runs through
all $m$-th roots of 1 in $\C$, attaining each value equally often.  On the other hand,
$\tr \rho_2(gk) = \tr \rho_2(g)$ for all $k \in K$.  So provided $\tr \rho_1(g) \ne 0$,
$\tr \rho_1$ and $\tr \rho_2$ can agree on at most $\frac{1}m |K|$ values on the coset $gK$.
Then note that the fraction of elements $g \in G$ for which $\tr \rho_1(g) \ne 0$ is the
same as the fraction of elements in $h \in H_1$ for which $\tr h \ne 0$.
\end{proof}

We say a subgroup $H_0$ of a group $H$ is $Z(H)$-free if $H_0 \ne 1$ and
$H_0 \cap Z(H) = 1$.
The above lemma implies that if $G_1$ has no $Z(G_1)$-free normal subgroups, 
then $\delta(\rho_1, \rho_2) \ge \frac {\alpha_n(H_1)} 2$ or $N_2 = 1$ (as the $K$ in the proof
must be nontrivial).
This will often allow us to reduce to the case where $N_2 = 1$, and similarly $N_1 = 1$, i.e., 
$G = G_1 = G_2$, when we can check this property for $G_1$ and $G_2$.
The following allows us to simply check it for $H_1$ and $H_2$.

\begin{lemma} If $H_1$ has no $Z(H_1)$-free normal subgroups, then
$G_1$ has no $Z(G_1)$-free normal subgroups.
\end{lemma}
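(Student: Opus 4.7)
The plan is to suppose $N \lhd G_1$ satisfies $N \cap Z(G_1) = 1$ and deduce $N = 1$; the mechanism is that such an $N$ must intersect $H_1$ trivially (by the hypothesis on $H_1$), and commutator calculations inside $G_1 = H_1 \cdot Z_1$ will then force $N$ back into $Z(G_1)$.

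First I would pin down the centers. Since $G_1$ is irreducible, $Z(G_1)$ consists of scalar matrices by Schur, and writing a central element $g = h z \in H_1 \cdot Z_1$ and requiring it to commute with every $h' \in H_1$ forces $h \in Z(H_1) \subseteq Z_1$. Hence $Z(G_1) = Z_1$, and in particular $H_1 \cap Z(G_1) = Z(H_1)$. Now $N \cap H_1$ is normal in $H_1$ (since $N \lhd G_1 \supseteq H_1$), and
\[ (N \cap H_1) \cap Z(H_1) \subseteq N \cap Z(G_1) = 1, \]
so the hypothesis that $H_1$ has no $Z(H_1)$-free normal subgroups forces $N \cap H_1 = 1$.

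To finish, for any $n \in N$ I would write $n = h_0 z_0$ with $h_0 \in H_1$ and $z_0 \in Z_1 \subseteq Z(G_1)$. For each $h \in H_1$, since $z_0$ is central, $[h,n] = [h,h_0] \in H_1$; and $[h,n] \in N$ since $N$ is normal, so $[h,n] \in H_1 \cap N = 1$. Thus $h_0$ centralizes all of $H_1$, so $h_0 \in Z(H_1) \subseteq Z_1$, whence $n = h_0 z_0 \in Z_1 = Z(G_1)$. Therefore $N \subseteq Z(G_1)$ and $N = 1$, contradicting $Z(G_1)$-freeness.

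I do not foresee a real obstacle. The only point that needs a moment of care is the identification $Z(G_1) = Z_1$, since it is exactly what allows the hypothesis $N \cap Z(G_1) = 1$ to be transported to $H_1$ via the inclusion $Z(H_1) \subseteq Z_1 = Z(G_1)$; once this is in hand the rest is a one-line commutator calculation.
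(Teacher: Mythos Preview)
Your proof is correct and takes a genuinely different route from the paper's. The paper works with the \emph{projection} $N' = \{\, n \in H_1 : (n,z) \in N \text{ for some } z \in Z_1\,\} \lhd H_1$: if $N'$ is trivial then $N \subset Z_1 = Z(G_1)$ immediately, and otherwise the hypothesis produces a nontrivial $a \in N' \cap Z(H_1)$, so some $(a,z) \in N$ already lies in $Z(G_1)$. You instead use the \emph{intersection} $N \cap H_1$: the hypothesis kills it outright, and then the commutator identity $[h,n] = [h,h_0] \in H_1 \cap N = 1$ (exploiting that the $Z_1$-part of $n$ is central) forces every $n \in N$ into $Z(G_1)$.

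Your approach is arguably cleaner: it avoids the bookkeeping of tracking which pair $(a,z)$ witnesses $a \in N'$ and whether that element is nontrivial in the central product, and it makes the role of the factorisation $G_1 = H_1 Z_1$ completely explicit through the commutator calculation. The paper's projection argument is shorter to state but leans more on the reader to unwind what ``$(a,z) \in N \cap Z(G_1)$'' means in the quotient. Both ultimately hinge on the same two facts you isolated: $Z(G_1) = Z_1$ and $H_1 \cap Z(G_1) = Z(H_1)$.
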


\begin{proof}
Suppose $H_1$ has no $Z(H_1)$-free normal subgroups, but that $N$ is a $Z(G_1)$-free
normal subgroup of $G_1$.  Let $N' = \{ n \in H_1 : (n, z) \in G_1 = H_1 \times_{Z(H_1)} Z_1
\text{ for some } z \in Z_1 \}$.  Then $N' \lhd H_1$.  If $N' = 1$, then 
$N \subset Z_1 = Z(G_1)$, contradicting $N$ being $Z(G_1)$-free.  Hence $N' \ne 1$ and
must contain a nontrivial $a \in Z(H_1)$.   But then $(a,z) \in N \cap Z(G_1)$ for some
$z \in Z_1$, which also contradicts $N$ being $Z(G_1)$-free.
\end{proof}

This will often allow us to reduce to the case where $G = H \times_{Z(H)} A$ for some
cyclic group $A \supset Z(H)$, where we can use the following.

\begin{lemma} \label{lem:35}
 Let $H$ be a finite group, $A \supset Z(H)$ an abelian group and
$G = H \times_{Z(H)} A$.  Then $\delta^\natural_n(G) \ge \min 
\{ \frac 12 \alpha_n(H), \delta^\natural_n(H) \}$.
\end{lemma}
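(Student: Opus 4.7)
The plan is to parametrize the irreducible $n$-dimensional representations of $G = H \times_{Z(H)} A$. Since the image of $A$ is central in $G$ and $A$ is abelian, each such $\rho$ corresponds to a pair $(\sigma, \chi)$ where $\sigma$ is an irreducible $n$-dimensional representation of $H$ and $\chi$ is a character of $A$ restricting to the central character $\omega_\sigma$ of $\sigma$ on $Z(H)$; the trace satisfies $\tr \rho([h,a]) = \chi(a)\, \tr \sigma(h)$. Since $A$ acts by scalars, any imprimitivity system for $\rho$ is one for $\sigma$ and vice versa, so $\rho$ is primitive iff $\sigma$ is. Pulling back from $G$ to $H \times A$ preserves the fraction of elements where two given trace functions disagree, so I reduce the problem to counting $(h,a) \in H \times A$ with $\chi_1(a)\, \tr \sigma_1(h) \ne \chi_2(a)\, \tr \sigma_2(h)$.

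I then split into three cases based on whether $\chi_1 = \chi_2$ and whether $\sigma_1 \simeq \sigma_2$ (at least one must fail since $\rho_1 \not\simeq \rho_2$). \emph{Case 1} ($\chi_1 = \chi_2$): Then $\sigma_1 \not\simeq \sigma_2$, the nonzero scalar $\chi_i(a)$ cancels from the comparison, and $\delta(\rho_1, \rho_2) = \delta(\sigma_1, \sigma_2) \ge \delta^\natural_n(H)$ since both $\sigma_i$ are primitive $n$-dimensional irreducibles of $H$. \emph{Case 2} ($\sigma_1 \simeq \sigma_2 =: \sigma$, $\chi_1 \ne \chi_2$): For $h$ with $\tr \sigma(h) \ne 0$, the traces of $\rho_1, \rho_2$ agree at $(h,a)$ iff $\chi_1(a) = \chi_2(a)$, and since $\chi_1/\chi_2$ is a nontrivial character of $A$ the set of such $a$ has index at least two. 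A direct count yields $\delta(\rho_1, \rho_2) \ge \tfrac 12 \alpha_n(\sigma) \ge \tfrac 12 \alpha_n(H)$, using that $\sigma$ embeds $H$ faithfully in $\GL_n(\C)$.

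\emph{Case 3} ($\sigma_1 \not\simeq \sigma_2$, $\chi_1 \ne \chi_2$) is the delicate one. For each $h$, the equation $\chi_1(a)\, \tr \sigma_1(h) = \chi_2(a)\, \tr \sigma_2(h)$ has $|A|$ solutions in $a$ when $h \in E_0 := \{h : \tr \sigma_1(h) = \tr \sigma_2(h) = 0\}$, no solutions when exactly one of the two traces vanishes, and at most $|\ker(\chi_1/\chi_2)| \le |A|/2$ solutions when both traces are nonzero. Summing over $h$ bounds the fraction of agreeing pairs by $\tfrac 12 + |E_0|/(2|H|)$, whence $\delta(\rho_1, \rho_2) \ge \tfrac 12 - |E_0|/(2|H|)$. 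Since $\sigma_1$ is a faithful primitive embedding, $|E_0|/|H| \le 1 - \alpha_n(\sigma_1) \le 1 - \alpha_n(H)$, giving $\delta(\rho_1, \rho_2) \ge \tfrac 12 \alpha_n(H)$.

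The main obstacle is the tight bookkeeping in Case 3, where both the character twist and the non-equivalence of the $\sigma_i$ contribute simultaneously; the key observation is that substantial agreement can come only from $h \in E_0$, a set whose size is controlled by $\alpha_n(H)$. Combining the three cases yields $\delta(\rho_1, \rho_2) \ge \min\{\tfrac 12 \alpha_n(H), \delta^\natural_n(H)\}$, as desired.
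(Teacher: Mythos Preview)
Your proof is correct and takes essentially the same approach as the paper: parametrize the primitive $n$-dimensional irreducibles of $G$ by pairs $(\sigma,\chi)$ via pullback to $H\times A$, then split according to whether $\chi_1=\chi_2$. The only difference is that the paper treats the entire case $\chi_1\ne\chi_2$ in one stroke by the coset argument of Lemma~\ref{lem:34} (which already gives $\delta\ge\tfrac{d-1}{d}\,\alpha_n(H)\ge\tfrac12\alpha_n(H)$ with $d$ the order of $\chi_1\chi_2^{-1}$, regardless of whether $\sigma_1\simeq\sigma_2$), so your separate Case~3 bookkeeping, while valid, is not needed; note also that your appeal to $\alpha_n(H)$ uses that $\sigma_i$ is faithful, an assumption the paper leaves implicit and which holds in all the intended applications where $H$ is a minimal lift.
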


\begin{proof}
We may assume $m = |A| > 1$.  Let $\rho_1, \rho_2: G \to \GL_n(\C)$ be distinct primitive representations of $G$.  They pull back to $H \times A$, so for $i=1,2$ 
we can view $\rho_i =  \tau_i \otimes \chi_i$ where $\tau_i: H \to \GL_n(\C)$ is primitive and
$\chi_i: A \to \C^\times$.  By a similar argument to the proof of Lemma \ref{lem:34}, we have
that $\delta(\rho_1, \rho_2) \ge \frac{m-1}m \alpha_n(H)$ if 
$\chi_1 \ne \chi_2$.  If $\chi_1 = \chi_2$, it is easy to see $\delta(\rho_1, \rho_2) = \delta(\tau_1, \tau_2)$.
\end{proof}

In the simplest situation, this method gives the following.

\begin{cor} \label{cor:method}
Let $\mathcal H$ be the set of minimal lifts of $\bar G_1$ and $\bar G_2$ to
$\GL_n(\C)$.   Suppose that $H$ has no $Z(H)$-free normal subgroups for all $H \in \mathcal H$.  
Then 
\[ \delta(\rho_1, \rho_2) \ge \min  \{ \frac 12 \alpha_n(H), \delta^\natural_n(H) : H \in \mathcal H \}. \]
\end{cor}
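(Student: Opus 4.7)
The plan is to handle the result by first reducing to the case $N_1 \cap N_2 = 1$ and then splitting on whether the representations are faithful, invoking Lemma \ref{lem:34} when the kernel argument produces nontrivial central elements and Lemma \ref{lem:35} in the remaining case.

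My first step would be to replace $G$ by $G/(N_1 \cap N_2)$; this leaves $\delta(\rho_1,\rho_2)$ unchanged because both characters are constant on cosets of $N_1 \cap N_2$. The hypothesis on $\mathcal H$, combined with the preceding lemma (which lifts the absence of $Z(H)$-free normal subgroups from $H_i$ to $G_i$), then propagates so that neither $G_1$ nor $G_2$ has any $Z(G_i)$-free normal subgroup.

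Now suppose $N_2 \ne 1$. Because $N_1 \cap N_2 = 1$, the image $\rho_1(N_2)$ is a nontrivial normal subgroup of $G_1$, so by the previous step it must intersect $Z(G_1)$ in a subgroup of some order $m \ge 2$. Lemma \ref{lem:34} then gives
\[ \delta(\rho_1,\rho_2) \ge \tfrac{m-1}{m}\,\alpha_n(H_1) \ge \tfrac{1}{2}\alpha_n(H_1). \]
The symmetric argument covers $N_1 \ne 1$. In the remaining case $N_1 = N_2 = 1$, both $\rho_i$ are faithful, so $G \cong G_1 = H_1 \times_{Z(H_1)} Z_1$, and Lemma \ref{lem:35} applied to this presentation yields
\[ \delta(\rho_1,\rho_2) \ge \delta^\natural_n(G) \ge \min\{\tfrac{1}{2}\alpha_n(H_1), \delta^\natural_n(H_1)\}. \]
In every case the bound is at least $\min\{\tfrac{1}{2}\alpha_n(H), \delta^\natural_n(H) : H \in \mathcal H\}$.

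The essential content has already been packaged into Lemmas \ref{lem:34} and \ref{lem:35} and the preceding lifting lemma, so no step seems genuinely hard. The one subtle point is that the faithful case uses $\delta(\rho_1,\rho_2) \ge \delta^\natural_n(G)$, which requires $\rho_1, \rho_2$ to be primitive; this is the implicit working hypothesis behind the corollary (the right-hand side features $\delta^\natural_n$), and is the thing to verify carefully when the corollary is applied. The organizational care in the proof is simply tracking that after the reduction $N_1 \cap N_2 = 1$, faithfulness really identifies $G$ with the central product of a minimal lift and a cyclic group, so that Lemma \ref{lem:35} is applicable.
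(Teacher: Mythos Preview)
Your proof is correct and matches the paper's intended argument: the paper does not spell out a separate proof of this corollary, but the sentence ``In the simplest situation, this method gives the following'' indicates precisely the assembly of Lemma~\ref{lem:34}, the lifting lemma, and Lemma~\ref{lem:35} that you carry out. Your observation that primitivity of $\rho_1,\rho_2$ is being used implicitly (to invoke $\delta(\rho_1,\rho_2)\ge \delta^\natural_n(G)$ in the faithful case) is accurate and worth noting.
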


This corollary will address most but not all cases of our proof of Theorem \ref{thm1}.  
Namely, when $n=3$,
it can happen that $\bar G_1$ has a lift $H \simeq \bar G_1$ which is simple, 
so $H$ is a $Z(H)$-free normal subgroup of itself.  So we will need to augment this
approach when $H_1$ or $H_2$ is simple. 

\section{Primitive degree 2 characters}

In this section we will prove the $n=2$ case of Theorem \ref{thm1}.

We used the computer package GAP 4 \cite{gap} for explicit group and character 
calculations in this section and the next.  
We use the notation $[n, m]$ for the $m$-th group of order $n$ in the Small Groups Library,
which is accessible by the command \verb+SmallGroup(n,m)+ in GAP.  
We can enumerate all (central or not) extensions of $G$ by $N$ in GAP if 
$|G||N| \le 2000$ and $|G||N| \ne 1024$ as all groups of these orders are in the Small 
Groups Library.  We can also compute homology groups $H_n(G,\Z)$ using the
HAP package in GAP.

\subsection{Finite subgroups of $\GL_2(\C)$}

Recall the classification of finite subgroups of $\PGL_2(\C) \simeq \mathrm{SO}_3(\C)$.
Any finite subgroup of $\PGL_2(\C)$ is of one of the following types:

\begin{enumerate}[(A)]
\item cyclic

\item dihedral

\item tetrahedral ($A_4 \simeq \PSL(2,3)$)

\item octahedral ($S_4$)

\item icosahedral ($A_5  \simeq \PSL(2,5) \simeq \PSL(2,4) \simeq \SL(2,4)$)
\end{enumerate}

Now suppose $G$ is a subgroup of $\GL_2(\C)$ with projective image $\bar G$ in
$\PGL_2(\C)$.  If $\bar G$ is cyclic, $G$ is reducible.  If $\bar G$
is dihedral, then $G$ is not primitive.

Assume $\bar G$ is primitive.  Then we have the following possibilities.

\begin{enumerate}[(A)]
\setcounter{enumi}{2}
\item  Suppose $\bar G = A_4 \simeq \PSL(2,3)$.  Here $H_1(A_4, \Z) \simeq \Z/3\Z$ and
$H_2(A_4, \Z) \simeq \Z/2\Z$.  There is one nonsplit element of $\Cent(A_4, C_2)$, namely
$\SL(2,3)$; one nonsplit element of $\Cent(A_4, C_3)$, namely [36, 3]; and one
element of $\Cent(A_4, C_6)$ which is not a central product with a smaller extension,
namely [72, 3].  Of these central extensions (and the trivial extension $A_4$), 
only $\SL(2,3)$ and $[72, 3]$ have irreducible
faithful 2-dimensional representations.  

Thus $\SL(2,3)$ and $[72, 3]$ are the only minimal lifts of $A_4$ to $\GL_2(\C)$.
We check that neither $H=\SL(2,3)$ nor $H=[72, 3]$ has $Z(H)$-free normal subgroups.
In both cases, we have $\alpha_2(H) = \frac 34$, and 
 $\delta_2^\natural(H) = \frac 23$.

\item Next suppose $\bar G = S_4$.  
Note $H_1(S_4, \Z) \simeq H_2(S_4, \Z) \simeq \Z/2\Z$.  There are 3 nonsplit central
extensions of $S_4$ by $C_2$: [48, 28], [48, 29], [48, 30].  Neither $S_4$ nor [48, 30]
have faithful irreducible 2-dimensional representations, but both [48, 28] and [48, 29] do.

Thus $H=[48, 28]$ and $H=[48, 29]$ are the minimal lifts of $S_4$ to $\GL_2(\C)$.  Neither
of them have $Z(H)$-free normal subgroups.  In both cases we compute
$\alpha_2(H) = \frac 58$ and $\delta_2^\natural(H) = \frac 14$.

\item Last, suppose $\bar G = A_5 = \PSL(2,5)$.  This group is perfect and 
$H_2(A_5, \Z) \simeq \Z/2\Z$, with $\SL(2,5)$ being the nontrivial central extension by
$C_2$ (the universal central extension).  Note $A_5$ has no irreducible 2-dimensional representations.

Hence there is only one minimal lift of $A_5$ to $\GL_2(\C)$, $H=\SL(2,5)$.
We can check that $\SL(2,5)$ has no $Z(\SL(2,5))$-free normal subgroups,
$\alpha_2(\SL(2,5)) = \frac 34$ and $\delta_2^\natural(\SL(2,5)) = \frac 25$  (cf.\ Theorem
\ref{thm2}).
\end{enumerate}

\subsection{Comparing characters}

Let $\rho_1, \rho_2: G \to \GL_2(\C)$ be inequivalent primitive representations.
By Corollary \ref{cor:method}, 
\[ \delta(\rho_1, \rho_2) \ge \min \left( \frac 12 \left\{ \frac 34, \frac 58, \frac 34 \right\} \cup 
\left\{ \frac 23, \frac 14, \frac 25 \right\} \right) = \frac 14. \]
This shows $d_2^\natural \ge \frac 14$.  Furthermore, we can only have $\delta(\rho_1, \rho_2) = \frac 14$ if $\bar G_1$ or $\bar G_2$ is $S_4$, which implies $G_1$ or 
$G_2$ is of the form $H \times_{C_2} C_{2m}$ for some $m$ with $H=[48, 28]$ or 
$H=[48, 29]$.  Thus we can only have $\delta_2^\natural(G) = \frac 14$ if $G$
is an extension of $H \times_{C_2} C_{2m}$ where $m \in \mathbb N$ and $H=[48, 28]$ or $H=[48, 29]$.  Moreover, if $G$ is such an extension
$\delta_2^\natural(G)$ equals $\frac 14$ because $\delta_2^\natural(H)$ does.

This completes the proof of Theorem \ref{thm1} when $n=2$.

\section{Primitive degree 3 characters}

Here we prove the $n=3$ case of Theorem \ref{thm1}.

\subsection{Finite subgroups of $\GL_3(\C)$}

First we review the classification of finite subgroups $\GL_3(\C)$.
The classification can be found in Blichfeldt \cite{blichfeldt} or Miller--Blichfeldt--Dickson
\cite{MBD}.  We follow the classification system therein.  The description involves
3 not-well-known groups, $G_{36} = [36,9]$, $G_{72} = [72, 41]$, and $G_{216} = [216,153]$.  
Explicit matrix presentations for preimages in $\GL_3(\C)$ are
given in \cite[Sec 8.1]{me:thesis}.

Any finite subgroup $G$ of $\GL_3(\C)$ with projective image $\bar G$ 
is one of the following types, up to conjugacy:

\begin{enumerate}[(A)]
\item abelian

\item a nonabelian subgroup of $\GL_1(\C) \times \GL_2(\C)$

\item a group generated by a diagonal subgroup and $\bmx & 1 & \\ && 1 \\ 1 & & \\ \emx$ 

\item a group generated by a diagonal subgroup, $\bmx & 1 & \\ && 1 \\ 1 & & \\ \emx$
and a matrix of the form $\bmx a && \\ && b \\ &c & \emx$

\item $\bar G \simeq G_{36}$

\item $\bar G \simeq G_{72}$

\item $\bar G \simeq G_{216}$

\item $\bar G \simeq A_5 \simeq \PSL(2,5) \simeq \PSL(2,4) \simeq \SL(2,4)$

\item $\bar G \simeq A_6 \simeq \PSL(2,9)$

\item $\bar G \simeq \PSL(2,7)$
\end{enumerate}

Of these types, (A), (B) are reducible, (C), (D) are imprimitive, and the remaining types
are primitive.  The first 3 primitive groups, (E), (F) and (G),  have non-simple projective images, whereas the latter 3, (H), (I) and (J), have simple projective images.

Now we describe the minimal lifts to $\GL_3(\C)$ of $\bar G$ for cases (E)--(J).

\begin{enumerate}[(A)]
\setcounter{enumi}{4}
\item We have $H_1(G_{36}, \Z) \simeq \Z/4\Z$ and $H_2(G_{36}, \Z) \simeq
\Z/3\Z$.  The nonsplit extension of $G_{36}$ by $C_2$ is [72, 19].  There is 
one non split extension of $G_{36}$ by $C_4$ which is not a central product,
[144, 51].  However, $G_{36}$, [72,~19] and [144,~51] all have no irreducible
3-dimensional representations.

There is 1 nonsplit central extension of $G_{36}$ by $C_3$, [108, 15];
there is one by $C_6$ which is not a central product, [216, 25]; there is one by
$C_{12}$ which is not a central product, [432, 57].  All of these groups have
faithful irreducible 3-dimensional representations.

Hence any minimal lift of $G_{36}$ to $\GL_3(\C)$ is $H=[108, 15]$, 
$H=[216, 25]$ or $H=[432, 57]$.  In all of these cases, $H$ has no $Z(H)$-free
normal subgroups, $\alpha_3(H) = \frac 79$ and $\delta_3^\natural(H) = \frac 12$.

\item We have $H_1(G_{72}, \Z) \simeq \Z/2\Z \times \Z/2\Z$ and 
$H_2(G_{72}, \Z) \simeq \Z/3\Z$.  There is a unique nonsplit central extension of $G_{36}$ by $C_2$, [144, 120]; a unique central extension of $G$ by $C_3$,
[216, 88]; and a unique central extension of $G$ by $C_6$ which is not a central
product, [432, 239].   Of these extensions (including $G_{72}$), only the latter two 
groups have faithful irreducible 3-dimensional representations.

Thus there are two minimal lifts of $G_{72}$ to $\GL_3(\C)$, 
$H=[216, 88]$ and $H=[432, 239]$.  In both cases, $H$ has no $Z(H)$-free
normal subgroups, $\alpha_3(H) = \frac 89$ and $\delta_3^\natural(H) = \frac 12$.

\item We have $H_1(G_{216}, \Z) \simeq H_2(G_{216}, \Z) \simeq \Z/3\Z$.
There are 4 nonsplit central extensions of $G_{216}$ by $C_3$: [648, 531],
[648, 532], [648, 533], and [648, 534].  Neither $G_{216}$ nor [648, 534] has
irreducible faithful 3-dimensional representations.

Thus there are three minimal lifts of $G_{216}$ to $\GL_3(\C)$,
$H=[648, 531]$, $H=[648, 532]$, and $H=[648, 533]$.  In all cases 
$H$ has no $Z(H)$-free
normal subgroups, $\alpha_3(H) = \frac {20}{27}$ and $\delta_3^\natural(H) = \frac 49$.

\item As mentioned in the $n=2$ case, $A_5 \simeq \PSL(2, 5)$ is perfect and we have 
$H_2(A_5, \Z) \simeq \Z/2\Z$.
The nontrivial extension by $C_2$ (the universal central extension) is $\SL(2,5)$,
but $\SL(2,5)$ has no faithful irreducible 3-dimensional representations.

Thus the only minimal lift of $A_5$ to $\GL_3(\C)$ is $A_5$ itself.  We have
$\alpha_3(A_5) = \frac 23$ and $\delta_3^\natural(A_5) = \delta_3^\natural(\PSL(2,5)) =
\frac 25$ (cf.\ Theorem \ref{thm2}).

\item The group $A_6$ is also perfect, but (along with $A_7$) exceptional among 
alternating groups in that $H_2(A_6, \Z) \simeq \Z/6 \Z$.  Neither $A_6 \simeq \PSL(2,9)$, nor its double cover $\SL(2,9)$, has
irreducible 3-dimensional representations. There is a unique nonsplit central 
extension of $A_6$ by $C_3$, sometimes called the Valentiner group, which
we denote $V_{1080} = [1080, 260]$ and is also a perfect group.  It is known (by
Valentiner) that $V_{1080}$ has an irreducible faithful 3-dimensional representation.

To complete the determination of minimal lifts of $A_6$ to $\GL_3(\C)$, we need
to determine the central extensions of $A_6$ by $C_6$.  Here we cannot (easily) 
proceed naively as in the other cases of testing all groups of the appropriate order 
because we do not have a library of all groups of order 2160.  We have
$|\~M(A_6, C_6)| = 6$, with one class accounted for by the split extension and
one by $\SL(2,9) \times_{C_2} C_6$.
Since $V_{1080}$ must correspond to two classes in $\~M(A_6, C_3)$, 
$V_{1080} \times_{C_3} C_6$ corresponds to two classes in $\~M(A_6, C_6)$
by the proof of Proposition \ref{prop:31}.  Since $A_6$ is perfect, it has a universal
central extension by $C_6$, which we denote $\~A_6$.  By Proposition \ref{prop:32}, 
$\~A_6$ accounts for the remaining 2 classes in $\~M(A_6, C_6)$, and thus we have 
described all elements of $\Cent(A_6, C_6)$.  The group $\~A_6$
is the unique perfect group of order 2160 and can be accessed by the command 
\verb+PerfectGroup(2160)+ in GAP, and we can check that it has no faithful 
irreducible 3-dimensional representations.

Hence $V_{1080}$ is the unique minimal lift of $A_6$ to $\GL_3(\C)$.  We note
$H=V_{1080}$ has no $Z(H)$-free normal subgroups, $\alpha_3(H) = \frac 79$,
and $\delta_3^\natural(H) = \frac 25$.

\item The group $\PSL(2,7)$ is perfect and $H_2(\PSL(2,7), \Z) \simeq \Z/2\Z$.
Since $\SL(2,7)$ has no faithful irreducible 3-dimensional representations, any
minimal lift of $\PSL(2,7)$ to $\GL_3(\C)$ is just $H = \PSL(2,7)$.  
Here $\alpha_3(H) = \frac 23$ and $\delta_3^\natural(H) = \frac 27$ by
Theorem \ref{thm2}.
\end{enumerate}

\begin{table}
\begin{center}
{\renewcommand{\arraystretch}{1.2}
\begin{tabular}{L|CCCCCC}
\bar G  & 3 & 1 & 0 & \frac{1\pm \sqrt 5}2 & \sqrt 2 & \sqrt 3\\
\hline
G_{36} & \frac{1}{36} & \frac 34 & \frac 29 & & &\\
G_{72} & \frac 1{72} & \frac 78 & \frac 1{9} && & \\
G_{216}  & \frac 1{216} & \frac 58 & \frac 7{27} & & & \frac 19 \\
A_5& \frac 1{60} & \frac 14 & \frac 13 & \frac 25 &\\
A_6  & \frac 1{360} & \frac 38 & \frac 29 & \frac 25 &\\
\PSL(2,7) & \frac 1{168} & \frac 38 & \frac 13 & & \frac 27 &
\end{tabular}
}
\caption{Fraction of group elements with primitive degree 3 characters having given absolute value}
\label{tabby}
\end{center}
\end{table}

\subsection{Comparing characters}

Let $G$ be a finite group and $\rho_1, \rho_2: G \to \GL_3(\C)$ be two inequivalent
primitive representations.  Let $G_i, N_i, H_i, Z_i$ be as in Section \ref{sec:reduction}.  As before, we may assume $N_1 \cap N_2 = 1$, so
$G$ contains a normal subgroup isomorphic to $N_1 \times N_2$ whose
image in $G_1$ is $N_2$ and image in $G_2$ is $N_1$.

\begin{prop} \label{prop:deg3}
Suppose at least one of $\bar G_1$, $\bar G_2$ is simple.  Then
$\delta(\rho_1, \rho_2) \ge \frac 27$, with equality only if $\bar G_1 \simeq \bar G_2 \simeq
\PSL(2,7)$.
\end{prop}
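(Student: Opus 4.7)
The plan is to reduce to the case where both $\rho_1$ and $\rho_2$ are faithful on $G$, then invoke Lemma \ref{lem:35}. Without loss of generality, $\bar G_1$ is simple, so $\bar G_1 \in \{A_5, A_6, \PSL(2,7)\}$ with unique minimal lift $H_1 \in \{A_5, V_{1080}, \PSL(2,7)\}$, and we may also assume $N_1 \cap N_2 = 1$. When $H_1 = V_{1080}$, which has no $Z(H_1)$-free normal subgroups, Corollary \ref{cor:method} (via Lemma \ref{lem:34}) immediately gives $\delta(\rho_1,\rho_2) \ge \frac{1}{2}\alpha_3(V_{1080}) = \frac{7}{18} > \frac{2}{7}$ whenever $N_2 \ne 1$.

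The cases $H_1 \in \{A_5, \PSL(2,7)\}$ are the main obstacle: here $H_1$ is itself a $Z(H_1)$-free normal subgroup of $G_1$ (since $Z(H_1) = 1$), so Corollary \ref{cor:method} does not apply. To handle them I would exploit that $G_1 \simeq H_1 \times Z_1$, whose normal subgroups (by Goursat, using that $H_1$ is non-abelian simple) are exactly $\{1\} \times Z'$ or $H_1 \times Z'$ for $Z' \subset Z_1$. The possibility $\rho_1(N_2) \supseteq H_1 \times \{1\}$ would force $\rho_2$ to factor through the cyclic quotient $G_1/(H_1 \times \{1\}) \cong Z_1$, contradicting primitivity of $\rho_2$; hence $\rho_1(N_2) \subset Z(G_1)$, and Lemma \ref{lem:34} forces either $N_2 = 1$ or $\delta \ge \frac{1}{2}\cdot\frac{2}{3} = \frac{1}{3} > \frac{2}{7}$. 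A symmetric argument reduces to $N_1 = 1$: if $\bar G_2$ is simple we repeat the above, and if $\bar G_2 \in \{G_{36}, G_{72}, G_{216}\}$ then the minimal lift $H_2$ has no $Z(H_2)$-free normal subgroups, so Corollary \ref{cor:method} yields $\delta \ge \frac{1}{2}\alpha_3(H_2) > \frac{2}{7}$ when $N_1 \ne 1$.

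After this reduction, $\rho_1$ and $\rho_2$ are both faithful, so by Schur's lemma $G$ embeds as an irreducible subgroup of $\GL_3(\C)$ in two ways with common projective image $\bar G_1 = \bar G_2 = G/Z(G)$, which is simple. Writing $G = H \times_{Z(H)} Z$ with $H = H_1$ the unique minimal lift of $\bar G_1$, Lemma \ref{lem:35} yields
\[ \delta(\rho_1,\rho_2) \ge \delta^\natural_3(G) \ge \min\!\left(\tfrac{1}{2}\alpha_3(H),\ \delta^\natural_3(H)\right). \]
From the values tabulated in the previous subsection, this minimum is $\frac{1}{3}$ for $H = A_5$, $\frac{7}{18}$ for $H = V_{1080}$, and $\frac{2}{7}$ for $H = \PSL(2,7)$. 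Therefore $\delta(\rho_1,\rho_2) \ge \frac{2}{7}$ with equality only when $\bar G_1 \simeq \bar G_2 \simeq \PSL(2,7)$, as desired.
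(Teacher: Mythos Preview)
Your reduction has a genuine gap in the step for $H_1 \in \{A_5, \PSL(2,7)\}$. You assert that $\rho_1(N_2) \supseteq H_1 \times \{1\}$ would force $\rho_2$ to factor through the cyclic quotient $G_1/(H_1 \times \{1\})$. That implication would hold if $G \cong G_1$, i.e., if $N_1 = 1$ --- but this is exactly what you have not yet established at that point. In general $G_2 = G/N_2$, not $G_1/\rho_1(N_2)$, and these differ when $N_1 \ne 1$. A concrete counterexample: take $G = A_5 \times A_5$ with $\rho_i$ the pullback of a faithful $3$-dimensional representation of $A_5$ along the $i$-th projection. Then $N_1 = 1 \times A_5$, $N_2 = A_5 \times 1$, $N_1 \cap N_2 = 1$, and $\rho_1(N_2) = A_5 = H_1 = G_1$, yet $G_2 \cong A_5$ is certainly not cyclic. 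The same construction with $G = H_1 \times H_2$ for any $H_1, H_2 \in \{A_5, \PSL(2,7)\}$ shows you cannot conclude $\rho_1(N_2) \subset Z(G_1)$, so the dichotomy ``$N_2 = 1$ or $\delta \ge \tfrac13$'' is unproven.

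The paper handles precisely this residual situation by two separate arguments. When $\bar G_1 \not\simeq \bar G_2$ it compares the distributions of absolute character values (Table~\ref{tabby}), which depend only on the projective image; e.g., the proportion $\tfrac{2}{7}$ of elements with $|\chi| = \sqrt 2$ in the $\PSL(2,7)$ case immediately gives $\delta > \tfrac{2}{7}$ against any other type. When $\bar G_1 \simeq \bar G_2$ with both $N_i$ nontrivial (so necessarily $N_1 \simeq N_2 \simeq H_1 \in \{A_5, \PSL(2,7)\}$, exactly the case your Goursat analysis isolates but does not eliminate), it uses a coset argument: on each coset $gN_1$, the value $\tr\rho_1$ is constant while $\tr\rho_2$ ranges over all character values of $H_2$ up to a fixed scalar, bounding the agreement fraction by the maximal proportion of elements of $H_1$ sharing a single trace value, which is well below $\tfrac12$. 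Your approach needs one of these additional ingredients to close the gap.
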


\begin{proof}
Say $\bar G_1$ is simple.  Then by above, $H_1$ is isomorphic to one of 
$A_5$, $V_{1080}$ and $\PSL(2,7)$.  

\medskip
{\bf Case I:} Suppose $\bar G_1 \not \simeq \bar G_2$.
For $i=1,2$, the fraction of $g\in G$ for which $|\tr \rho_i(g)| = x$ is the same as
the fraction of $h \in H_i$ for which $|\tr h| = x$.  Calculations show
that the proportion of such $g \in G$ (given $x$) depends neither on the minimal
lift $H_i$ nor its embedding into $\GL_3(\C)$, but just on $\bar G_i$.  These proportions are given in Table \ref{tabby}.

If $\bar G_1 \simeq \PSL(2,7)$, we see $\delta(\rho_1, \rho_2) \ge \frac 27$
just from considering elements with absolute character value $\sqrt 2$.  Looking
at other absolute character values shows this inequality is strict.

If $\bar G_1 \simeq A_5$ or $A_6$ and $\bar G_2$ is not isomorphic to $A_5$ or
$A_6$, then considering elements with absolute character value 
$\frac{1\pm \sqrt 5}2$ shows $\delta(\rho_1, \rho_2) \ge \frac 25$.

So assume $\bar G_1 \simeq A_5$ and $\bar G_2 \simeq A_6$.  Then
$G_1 = A_5 \times C_m$ and $G_2 \simeq V_{1080} \times_{C_3} C_{3r}$ for some
$m, r \in \mathbb N$.  Suppose $\delta(\rho_1, \rho_2) < \frac 13$.  By 
Lemma \ref{lem:34}, $\rho_1(N_2)$ and $\rho_2(N_1)$ are either $Z(G_1)$- and
$Z(G_2)$-free normal subgroups of $G_1$ and $G_2$ or trivial.  This forces
$N_1 = 1$, so $G \simeq G_1$, but it is impossible for a quotient of $G_1$ to
be isomorphic to $G_2$.  Hence $\delta(\rho_1, \rho_2) \ge \frac 13 > \frac 27$ in
this case.

\medskip
{\bf Case II:} Suppose $\bar G_1 \simeq \bar G_2$.

First suppose $N_1$ or $N_2$ is trivial, say $N_1$.  Then $G \simeq G_1$. 
By Lemma \ref{lem:35}, we have 
$\delta_3^\natural(G) \ge \min \{ \frac 13, \delta_3^\natural(H_1) \}$.  Thus
$\delta_3^\natural(G) = \frac 27$ if and only if $H_1 = \PSL(2,7)$.

So assume $N_1$ and $N_2$ are nontrivial.  By Lemma \ref{lem:34},
we can assume $\rho_1(N_2)$ and $\rho(N_1)$ are 
$Z(G_1)$- and $Z(G_2)$-free normal subgroups of $G_1$ and $G_2$.  This is only
possible if $N_1 \simeq N_2 \simeq H_1 \simeq H_2$ is isomorphic to 
$A_5$ or $\PSL(2,7)$.  

Let $N = \rho_1^{-1}(N_2) \lhd G$ and we identify $N = N_1 \times N_2$.
Fix $g \in G$.  Then for any $n_1 \in N_1$,  
$\tr \rho_1(g(n_1, 1)) = \tr \rho_1(g)$ but $\tr \rho_2(g(n_1,1)) = \tr \rho_2(g(n_1,1))$.
Since $\rho_2( g(N_1 \times 1)) = H_2 \times \{ z \}$ for some $z \in Z_2$, 
the fraction of elements of $g (N_1 \times 1)$ (and thus of $G$) on which
$\tr \rho_1$ and $\tr \rho_2$ can agree is at most the maximal fraction of elements
of $H_1$ with a given trace.  By Table \ref{tabby} this is less than $\frac 12$ for
either $\bar G_1 \simeq A_5$ or $\bar G_1 \simeq \PSL(2,7)$.
\end{proof}

To complete the proof of Theorem \ref{thm1} for $n=3$, it suffices to show
$\delta(\rho_1, \rho_2) > \frac 27$ when $\bar G_1$ and $\bar G_2$ are each
one of $G_{36}$, $G_{72}$ and $G_{216}$. 
Using Corollary \ref{cor:method}, in this situation we see
\[ \delta(\rho_1, \rho_2) \ge \min \left( \frac 12 \left\{ \frac 79, \frac 89,
\frac {20}{27} \right\} \cup \left\{ \frac 12, \frac 12, \frac 49 \right \} \right) = \frac {10}{27}. \]
This finishes Theorem \ref{thm1}.

\section{Families $\SL(2,q)$ and $\PSL(2,q)$}
\label{sec:sl2q}

We consider ${\SL}(2,q)$ and ${\PSL}(2,q)$, for even and odd prime powers $q$. 
We separate these into three subsections: $\SL(2,q)$, $q$ odd; ${\SL}(2,q) \simeq {\PSL}(2,q) $, $q$ even; and ${\PSL}(2,q)$, $q$ odd. 
We refer to, and mostly follow the notation of, Fulton--Harris~\cite{fulton-harris} for the representations of these groups.

Choose an element $\Delta \in \F_q^\times  - (\F_q^\times )^2$. Denote by $\E:=\F_q (\sqrt{\Delta})$ the unique quadratic extension of $\F_q$. We can write the elements of $\E$ as $a + b \delta$, where $\delta := \sqrt \Delta$. The norm map $N: \E^\times  \rightarrow \F_q^\times $ is then defined as $N (a + b \delta) = a^2 - b^2  \Delta$.
We also denote $\E^1$ to be the kernel of the norm map.

\subsection{$\SL(2,q)$, for odd $q$}

The order of $\SL(2,q)$ is $(q + 1)q (q-1)$. We begin by describing the conjugacy classes for $\SL(2,q)$:

 \begin{enumerate}[(A)]
\item $I$.
\item $-I$.
\item Conjugacy classes of the form $[c_2(\epsilon,\gamma)]$, where
$c_2 (\epsilon, \gamma) =      \left( \begin{array}{cc}
\epsilon&\gamma \\
&\epsilon
\end{array} \right),$
where $\epsilon = \pm 1$ and $\gamma = 1 \text{ or } \Delta$. So there are four conjugacy classes, each of size $(q ^2 -1)/2$.\\
\item Conjugacy classes of the form $[c_3(x)]$, where
$c_3 (x) =      \left( \begin{array}{cc}
x& \\
&x ^{-1}
\end{array} \right)$
with $x \neq \pm 1$. Since the conjugacy classes $c_3 (x)$ and $c_3 (x ^{-1})$ are the same, we have $(q-3)/2$ different conjugacy classes, each of size $q (q + 1)$.\\
\item Conjugacy classes of the form $[c_4(z)]$, where
$c_4 (z) =      \left( \begin{array}{cc}
x&\Delta y \\
y&x
\end{array} \right)$
where $z = x + \delta y \in \E^1$ and $z \neq \pm 1$. Since $c_4 (z) = c_4 (\bar{z})$ we have $(q-1)/2$ conjugacy classes, each of size $q (q-1)$.\\
\end{enumerate}

We give a brief description of the representations that appear in the character table.
The first set of representations, denoted $W_\alpha$, are induced from the subgroup $B$ of upper triangular matrices. Given a character $\alpha \in \widehat{\F_q^\times}$, we can extend this to a character of $B$, which we then induce to a $(q + 1)$-dimensional representation $W_\alpha$ of ${\rm SL}_2(\F_q)$. If $\alpha^2 \neq 1$, then the induced representation is irreducible. If $\alpha = 1$, then $W_1$ decomposes into its irreducible consituents: the trivial representation $U$ and the Steinberg representation $V$. If $\alpha ^2 = 1$ and $\alpha \ne 1$, then it decomposes into two irreducible constituents denoted $W^+$ and $W^-$.

For the remaining irreducible representations, we consider characters $\alpha$ and $\varphi$ of the diagonal subgroup $A$ and the subgroup $S:=\{c_4 (z) \mid z \in \E^1\}$, respectively,  where the characters agree when restricted to $A \cap S$. Then we construct a virtual character $\pi_\varphi := {\rm Ind}^G_A (\alpha) - W_\alpha - {\rm Ind}^G_S (\varphi)$ (note that the virtual character will not depend on the specific choice of $\alpha$).

When $\varphi = \overline{\varphi}$, $\pi_\varphi$ decomposes into two distinct characters. In the case when $\varphi$ is trivial, $\pi_1$ decomposes into the difference between the characters for the Steinberg representation and the trivial representation. If $\varphi$ is the unique (non-trivial) order 2 character of $S$, then $\pi_\varphi$ decomposes into two distinct irreducible characters of equal dimension; we will label the corresponding representations $X^+$ and $X^-$.
If $\varphi \neq \overline{\varphi}$, then $\pi_\varphi$ corresponds to an irreducible representation, which we denote as $X_\varphi$.  Two irreducibles $X_\varphi$ and $X_{\varphi'}$ are equivalent  if and only if $\varphi = \varphi'$ or $\varphi = \overline{\varphi'}$.
We note that out of all the irreducible representations, the imprimitive representations are exactly all the $W_\alpha$ (for $\alpha^2 \neq 1$).

We define some notation that will appear in the character table for $\SL(2,q)$.
Let $\alpha \in \widehat{\F_q^\times}$ with $\alpha \neq \pm 1$, and $\varphi$ a character of $\E^1$ with $\varphi ^2 \neq 1$. Fix $\tau$ to be the non-trivial element of $\widehat{\F_q^\times / (\F_q^\times)^2 }$, and let 
\begin{align*}
s^\pm (\epsilon,\gamma) &= \frac{1}{2} (\tau (\epsilon) \pm \tau (\epsilon \gamma) \sqrt{\tau (-1)q}),\\
u^\pm (\epsilon,\gamma) &= \frac{1}{2} \epsilon (-\tau (\epsilon) \pm \tau (\epsilon \gamma) \sqrt{\tau (-1)q}).
\end{align*}
Lastly, we define $\psi$ to be the non-trivial element of $\widehat{\E^1 / (\E^1)^2}$.
The character table is:
    \begin{center}
      \begin{tabular}{r|c|c|c|c|c|c} 
 &   &$[I]$&$[-I]$&$[c_2(\epsilon, \gamma)]$&$[c_3(x)]$& $[c_4(z)]$ \\ \hline
 & Size: &1 &1 & $\frac{q ^2 -1}{2}$ & $q (q + 1)$& $q (q-1)$\\ \hline 
Rep & \# &&&&& \\ \hline
$U$ & 1 & 1 &1&1&1&1 \\
$X^\pm $& 2 & $\frac{q-1}{2}$ & $\frac{q-1}{2} \cdot \psi (-1)$& $u^\pm (\epsilon,\gamma)$& 0& $- \psi (z)$  \\
$W^\pm$ & 2 & $\frac{q + 1}{2}$ & $\frac{q + 1}{2} \cdot \tau (-1)$& $s^\pm (\epsilon,\gamma)$& $\tau (x)$& 0 \\
$X_\varphi$ & $\frac{q-1}{2}$ & $q-1$& $(q-1)\varphi (-1)$& $-\varphi (\epsilon)$&0& $-\varphi (z) - \varphi (z ^{-1})$ \\
$V$ & 1 & $q$ & $q$ & 0 & 1 & $-1$\\
$W_\alpha$ & $\frac{q-3}{2}$&$q + 1$&$(q + 1)\alpha (-1)$&$\alpha (\epsilon)$&
$\alpha (x) + \alpha (x ^{-1})$&$0$ \\
      \end{tabular}
    \end{center}
 
\subsubsection*{\textnormal{\textbf{The pair of representations $X^\pm$:}}}
The two $(q-1)/2$-dimensional representations $X^+$ and $X^-$ have the same trace character values for exactly all group elements outside of $[c_2 (\epsilon,\gamma)]$, so we have 
$\delta (X^+, X^-) = {2}/{q}.$

\subsubsection*{\textnormal{\textbf{The pair of representations $W^\pm$:}}}
The two $(q + 1)/2$-dimensional representations $W^+$ and $W^-$ have the same trace character values exactly for all group elements outside of the $[c_2 (\epsilon,\gamma)]$ conjugacy classes. So again we have
$\delta (W^+, W^-) = {2}/{q}.$

\subsubsection*{\textnormal{\textbf{$(q-1)$-dimensional representations:}}}\label{pi-a}
There are $(q-1)/2$ such representations, denoted $X_\varphi$, where $\varphi \in \widehat{\E^1}$, for $\varphi ^2 \neq 1$. Note that $|\E^1| = q + 1$.
 
In order to determine $\delta (X_\varphi, X_{\varphi'})$, we need to find the number of $z \in \E^1$ for which $\varphi (z) + \varphi (z^{-1}) = \varphi' (z) + \varphi' (z^{-1})$, and whether $\varphi (-1) = \varphi' (-1)$.

We begin with the first equation. Note that ${\rm Im} (\varphi), {\rm Im} (\varphi') \subset \mu_{q+1}$, where $\mu_n$ denotes the $n$th roots of unity.  Then $\varphi (z) + \varphi (z^{-1} )$ is of the form $\zeta^a + \zeta^{-a}$, where $\zeta$ is the primitive $(q+1)$th root of unity $e^ {2\pi i / (q+1)}$ and $a$ is a non-negative integer less than $q+1$. Now $\zeta^a + \zeta^{-a} = \zeta^b + \zeta^{-b}$ for some $0 \leq a,b < q+1$ implies that $a = b$ or $(q+1) -b$.
So $\varphi (z) + \varphi (z^{-1}) = \varphi' (z) + \varphi' (z^{-1})$ iff $\varphi (z) = \varphi'(z)$ or $\varphi(z) = \varphi' (z^{-1})$.

If $\varphi (z) = \varphi' (z)$, then this is equivalent to $(\varphi') ^{-1} \varphi(z) = 1$, and the number of $z$ for which this holds is $| {\rm ker}\,  (\varphi') ^{-1} \varphi|$. The number of $z$ for which $\varphi (z) = \varphi' (z ^{-1})$ is $| {\rm ker}\, \varphi' \varphi |$. Thus the number of $z \in
\E^1$ for which $\varphi (z) + \varphi (z^{-1}) = \varphi' (z) + \varphi'(z ^{-1})$ is 
\begin{align*}
| {\rm ker}\, (\varphi') ^{-1} \varphi | + |{\rm ker}\, \varphi' \varphi | - |{\rm ker}\, \varphi' \varphi \cap {\rm ker}\, (\varphi')^{-1} \varphi |.
\end{align*}

Now $\E^1$ is a cyclic group, so we can fix a generator $g$. The elements of $\widehat{\E^1}$ can then be denoted as $\{\varphi_0, \varphi_1, \varphi _2, \dots, \varphi_{q}\}$, where $\varphi_m$ is defined via $\varphi_m (g) = \zeta^m$. Note that 
$|{\rm ker}\, \varphi_m |= (m, q+1).$
Define 
\begin{align*}
M_{k}(m,m') := \frac{(m + m', k) + (m-m', k) - (m+m', m-m',k) - 1 - t_{m,m'}}{2},
\end{align*}
where $t_{m,m'}=1$ if both $k$ and $m + m'$ are even, and $0$ otherwise.
 
Then:
\begin{lemma}
For distinct integers $0 \leq m, m' < q+1$, we have 
\[ |\{[c_4(z)] : \varphi_m (z) + \varphi_m (z ^{-1}) = \varphi_{m'} (z) + \varphi_{m'} (z ^{-1}) \}| = M_{q + 1} (m, m'). \]
\end{lemma}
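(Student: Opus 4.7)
The plan is to reduce the count of conjugacy classes to a count of elements of the cyclic group $\E^1$, and then apply elementary character theory on cyclic groups.

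First I would observe that the conjugacy class $[c_4(z)]$ is determined by the unordered pair $\{z, z^{-1}\}$: indeed $c_4(z) = c_4(\bar z)$, and for $z \in \E^1$ one has $\bar z = z^q = z^{-1}$ since $N(z) = z^{q+1} = 1$. The admissible values of $z$ are $\E^1 \setminus \{\pm 1\}$. The equation in the lemma is manifestly symmetric under $z \leftrightarrow z^{-1}$, so its solution set $S \subseteq \E^1$ is stable under inversion. Letting $N = |S \cap (\E^1 \setminus \{\pm 1\})|$, the desired number of conjugacy classes is therefore $N/2$.

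Next, the discussion preceding the lemma shows that the equation $\varphi_m(z) + \varphi_m(z^{-1}) = \varphi_{m'}(z) + \varphi_{m'}(z^{-1})$ is equivalent to $z \in \ker \varphi_{m-m'} \cup \ker \varphi_{m+m'}$. Using $|\ker \varphi_n| = (n, q+1)$ and the fact that $\ker \varphi_a \cap \ker \varphi_b$ has order $(a, b, q+1)$ in the cyclic group $\E^1$ of order $q+1$ (the latter a direct computation: the intersection is generated by $g^{(q+1)/(a,b,q+1)}$), inclusion-exclusion yields
\[
|S| = (m-m', q+1) + (m+m', q+1) - (m-m', m+m', q+1).
\]

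Finally I would remove the self-inverse solutions. The element $z = 1$ always lies in $S$. The element $z = -1$ belongs to $\E^1 \setminus \{1\}$ exactly when $q+1$ is even, and in that case $\varphi_j(-1) = (-1)^j$, so $-1 \in S$ iff $m + m'$ is even; this is precisely the indicator $t_{m,m'}$ with $k = q+1$. Subtracting $1 + t_{m,m'}$ from $|S|$ and dividing by $2$ then yields $M_{q+1}(m, m')$. The only mildly subtle point is keeping the boundary cases $z = \pm 1$ straight so that the involution $z \leftrightarrow z^{-1}$ partitions the remaining solutions into genuine pairs; everything else is formal.
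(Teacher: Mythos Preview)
Your argument is correct and follows essentially the same route as the paper: the discussion immediately preceding the lemma already establishes that the equation holds iff $z \in \ker\varphi_{m-m'} \cup \ker\varphi_{m+m'}$ and computes the size of this union by inclusion--exclusion, so the lemma is just the bookkeeping step of excising $z=\pm 1$ and halving, exactly as you do. Your write-up makes this final step explicit where the paper leaves it to the reader.
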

 
If $m$ and $m'$ have the same parity, then $\varphi_m (-1) = \varphi_{m'} (-1)$ so
\begin{align}  \label{eq:Xphi-even}
\delta (X_{\varphi_m},X_{\varphi_{m'}}) 
=\frac{1}{q+1} \left(\frac{q-1}{2} -  M_{q+1}(m,m')\right).
\end{align}

If $m$ and $m'$ have different parity, then
\begin{align} \label{eq:Xphi-odd}
\delta (X_{\varphi_m},X_{\varphi_{m'}})
= \frac{1}{q ^2 -1}\left(\frac{q ^2 +1}{2} -  M_{q+1}(m,m')(q - 1)\right).
\end{align}

To determine the minimum possible value of $\delta$ above, we consider the maximum possible size of $M_k (m,m')$.

\begin{lemma} \label{max-lem}
Suppose $k=2^j \ge 8$.  Then 
\[ \max M_k(m,m') = 2^{j-2} -1 = \frac k4-1, \]
where $m, m'$ run over distinct classes in $\Z/k\Z \setminus \{ 0, \frac k2 \}$ with $m \not 
\equiv \pm m'$.

Suppose $k \in 2 \mathbb N$ is not a power of 2 and let $p$ be the smallest odd
prime dividing $k$.   Then
\[ \max M_k(m,m') = \begin{cases}
\frac k4 \left( 1 + \frac 1 p \right) - 1 & k \equiv 0 \pmod 4 \\
\frac{k-2}4 & k \equiv 2 \pmod 4, \\
\end{cases}  \]
where $m, m'$ range as before.

In all cases above, the maximum occurs with $m, m'$ of the same parity if and only if $4 | k$.
\end{lemma}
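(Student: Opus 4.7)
My approach is to substitute $a = m + m'$ and $b = m - m'$ in $\Z/k\Z$; these satisfy the parity constraint $a \equiv b \pmod 2$ and, via the restrictions on $m, m'$, the four nondegeneracy conditions $a, b, a+b, a-b \not\equiv 0 \pmod k$. Setting $d_1 = (a, k)$, $d_2 = (b, k)$, $d = (\gcd(a,b), k)$, the identity
\[ 2 M_k(m, m') = d_1 + d_2 - d - 1 - t \]
recasts the problem as maximizing $d_1 + d_2 - d - t$ over admissible $(a, b)$. Write $k = 2^\alpha n$ with $n$ odd.

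The basic dichotomy is the common parity of $a, b$. In the odd case ($t = 0$), each $d_i = (a_i, n)$ divides $n$, so $d_1 + d_2 - d \le n$, with equality attained by taking $a = n$ and any admissible odd $b$ (one then has $\gcd(a, b) = (b, n) = d_2$, hence $d = d_2$); this gives the odd maximum $M_k = (n-1)/2$. In the even case ($t = 1$), the largest proper even divisor of $k$ is $k/2$ when $\alpha \ge 2$, and is $2n/p$ when $\alpha = 1$, with $p$ the smallest odd prime of $n$. For $\alpha \ge 2$ and $n = 1$, the choice $(a, b) = (k/2, k/4)$ gives $M_k = k/4 - 1$; for $\alpha \ge 2$ and $n > 1$, the choice $(a, b) = (k/2, k/p)$ yields $\gcd(a, b) = k/(2p)$ and hence $M_k = k(p+1)/(4p) - 1$. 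In both subcases these dominate the odd maximum because they scale with $2^\alpha$.

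The three cases of the lemma then follow by comparing the two optima, and the ``same parity iff $4 \mid k$'' assertion is immediate, since same parity of $m, m'$ is equivalent to $a, b$ both even, which is the winning regime precisely when $4 \mid k$. The main obstacle is the $k \equiv 2 \pmod 4$ case, where the naive even-case bound $d_1 + d_2 \le 4n/p$ can exceed $n$ when $p = 3$. Here one uses the constraint $a + b \not\equiv 0 \pmod k$ to rule out $d_1 = d_2 = 2n/3$: the only nonzero multiples of $2n/3$ in $\Z/k\Z$ are $2n/3$ and $4n/3$, which sum to $0$. Hence if $d_1 = 2n/3$ then $d_2 < 2n/3$, so $d_2 \le 2n/(3q)$ for the next prime $q$ of $n$ (or $d_2 \le 2n/9$ when $n$ is a $3$-power); combining with $d \ge \gcd(d_1, d_2)$ gives $d_1 + d_2 - d \le 14n/15$ (or smaller), always strictly below $n$, so the odd case wins and yields $M_k = (k-2)/4$.
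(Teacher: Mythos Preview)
Your approach is essentially the paper's: both reduce to maximizing $d_1+d_2-\gcd(d_1,d_2)$ over admissible pairs of proper divisors $d_1=(m+m',k)$, $d_2=(m-m',k)$ of the same parity. The paper's proof is equally terse about why the exhibited pairs are optimal, so your level of detail in the $4\mid k$ cases is comparable.

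There is a slip in your treatment of $k\equiv 2\pmod 4$ with $p=3$. The bound ``$d_2\le 2n/(3q)$'' is wrong: if $d_1=2n/3$ and $d_2<2n/3$, the next largest proper even divisor of $2n$ is $2n/q$ (or $2n/9$ if $n$ is a $3$-power), not $2n/(3q)$. What you actually need is that, taking $d_1=2n/3$ and $d_2=2n/q$, one gets $\gcd(d_1,d_2)=2n/(3q)$ and hence $d_1+d_2-\gcd(d_1,d_2)=2n(q+2)/(3q)\le 14n/15$; for smaller $d_2$ the quantity $d_2-\gcd(2n/3,d_2)$ only decreases. You should also dispose of the case where \emph{neither} $d_i$ equals $2n/3$: then both $d_i\le 2n/q$, so $d_1+d_2-\gcd(d_1,d_2)\le 4n/q-2\le 4n/5-2<n-1$, which is again below the odd-case value. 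With these two corrections the argument goes through. (Also note that your ``$d\ge\gcd(d_1,d_2)$'' is in fact an equality, since $\gcd(a,b,k)=\gcd(\gcd(a,k),\gcd(b,k))$.)
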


\begin{proof} Let $d = (m+m', k)$ and $d' = (m-m', k)$, so our restrictions on $m, m'$
imply that $d, d'$ are proper divisors of $k$ of the same parity.  
Note that any pair of such $d, d'$ arise from some $m, m'$
if $d \ne d'$, and the case $d = d' = \frac k2$ does not occur.
 Then $M_k(m,m') = \frac 12 ( d + d' - (d, d') -1 - t_{m,m'})$, and
$m, m'$ have the same parity if and only if $d, d'$ are both even.

The case $k=2^j$ has a maximum with $d = \frac k2$ and $d'= \frac k4$.

Suppose $k=2pk'$ as in the second case.  Then
note $d + d' - (d, d')$ is maximised when $d = \frac k2$ and $d' = \frac kp$,
which is an admissible pair if $k'$ is even.  Otherwise, we get a maximum when $d = \frac k2$ and $d' = \frac k{2p}$.
\end{proof}

In all cases we have 
\begin{equation} \label{eq:max-bound}
\max M_k(m,m') \le \frac k3 - 1,
\end{equation} and equality is obtained if and
only if $12 | k$ for suitable $m, m'$ of the same parity.
This leads to an exact formula for $\delta_{q-1}(\SL(2,q))$ with $q > 3$ odd by
combining with \eqref{eq:Xphi-even} and \eqref{eq:Xphi-odd}.  We do not
write down the final expression, but just note the consequence that
$\delta_{q-1}(\SL(2,q)) \ge \frac 16$ with
equality if and only $12 | (q+1)$.

\subsubsection*{\textnormal{\textbf{$(q + 1)$-dimensional representations:}}}\label{rho-a}
Consider $W_\alpha, W_{\alpha'}$, where $\alpha, \alpha' \in \widehat{\F_q^\times} - \{\pm 1\}$ and $\alpha \neq \alpha'$.
Since $ |\F_q^\times| = q - 1$, we know that ${\rm Im}(\alpha) < \mu_{q - 1}$. So, given a generator $g$ of the cyclic group $\F_q^\times$, we define the elements of $\widehat{\F_q^\times}$ as: $\alpha_m (g) = \zeta^m$, where $\zeta := e ^{2\pi i / (q - 1)}$, and $0 \leq m \leq q-2$. \\

Using similar arguments to the $(q-1)$-dimensional case above, we have:
\begin{lemma}
For distinct integers $0 \leq m, m' < q-1$, we have 
\begin{align*}
&\left| \{[c_3(x)] : \alpha_m (x) + \alpha_m (x ^{-1}) = \alpha_{m'} (x) + \alpha_{m'} (x ^{-1}) \} \right|
= M_{q-1}(m,m').
\end{align*}
\end{lemma}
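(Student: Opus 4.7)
The plan is to mirror the proof of the preceding lemma, now applied to the cyclic group $\F_q^\times$ of order $q-1$ (rather than $\E^1$ of order $q+1$), with additional bookkeeping coming from the facts that $[c_3(x)] = [c_3(x^{-1})]$ and that neither $x = 1$ nor $x = -1$ contributes a class of the form $[c_3(x)]$.

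First I would write $x = g^a$ with $a \in \{0, 1, \dots, q-2\}$ and set $\zeta = e^{2\pi i / (q-1)}$, so that $\alpha_m(x) + \alpha_m(x^{-1}) = \zeta^{ma} + \zeta^{-ma}$. Exactly as in the previous lemma, one has $\zeta^{ma} + \zeta^{-ma} = \zeta^{m'a} + \zeta^{-m'a}$ if and only if $(m-m')a \equiv 0$ or $(m+m')a \equiv 0 \pmod{q-1}$. By inclusion--exclusion, the number of $a \in \{0, \dots, q-2\}$ satisfying at least one of these congruences is
\[
(m-m', q-1) + (m+m', q-1) - (m-m', m+m', q-1),
\]
since the simultaneous system is equivalent to $\gcd(m-m', m+m') \cdot a \equiv 0 \pmod{q-1}$.

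Next I would account for the passage to conjugacy classes $[c_3(x)]$ with $x \ne \pm 1$. The value $a = 0$ (i.e., $x = 1$) always lies in the solution set, while $a = (q-1)/2$ (i.e., $x = -1$) lies in it precisely when $m + m'$ is even. Since $q-1$ is even, this is exactly the condition defining $t_{m,m'} = 1$. Moreover, $a = 0$ and $a = (q-1)/2$ are the two fixed points of the involution $a \mapsto -a$ on $\Z/(q-1)\Z$; after removing those that contribute, the involution acts freely on what remains, and each orbit $\{a, -a\}$ corresponds to a single class $[c_3(g^a)]$. Dividing by $2$ then yields the stated value $M_{q-1}(m, m')$.

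The only subtle point is correctly tracking the $\pm 1$-correction through $t_{m,m'}$: one must verify that $a = (q-1)/2$ is a valid solution exactly when $m + m'$ is even, and that this is also precisely when both $q-1$ and $m + m'$ are even (automatic for the former in odd characteristic). Once this parity bookkeeping is sorted, the argument reduces to routine gcd counting and closely parallels the $X_\varphi$ computation.
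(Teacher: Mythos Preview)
Your proposal is correct and follows exactly the approach the paper indicates: the paper's own proof consists only of the remark ``using similar arguments to the $(q-1)$-dimensional case above,'' and your argument is precisely that analogue, carried out in detail for $\F_q^\times$ in place of $\E^1$. The inclusion--exclusion count, the removal of the fixed points $a=0$ and $a=(q-1)/2$ via the $-1-t_{m,m'}$ correction, and the halving under $a\mapsto -a$ are all handled correctly.
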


Given that the value of $\alpha_m (-1)$ is $+1$ if $m$ is even and $-1$ if $m$ is odd, we obtain
that if $m$ and $m'$ have the same parity, then 
\begin{align*}
\delta (W_{\alpha_m},W_{\alpha_{m'}}) 
=\frac{1}{q-1} \left(\frac{q - 3}{2} -  {M_{q-1}(m,m')}{}\right).
\end{align*}
Whereas if $m$ and $m'$ have different parity, then 
\begin{align*}
\delta (W_{\alpha_m},W_{\alpha_{m'}})  = \frac{1}{q ^2 -1}\left(\frac{q ^2 -3}{2} -  {M_{q-1}(m,m')}{}(q + 1)\right).
\end{align*}

Combining these with Lemma \ref{max-lem} for $q > 5$
gives a formula for $\delta_{q+1}(\SL(2,q))$.  In particular,
\eqref{eq:max-bound} gives $\delta_{q+1}(\SL(2,q)) \ge \frac 16$,
with equality if and only if $12 | (q-1)$.

\subsection{$\SL(2,q)$, for even $q$}

We keep the notation from the previous section. The order of $\SL(2,q)$ is again $q(q + 1)(q-1)$.
The conjugacy classes for $\SL(2,q)$, $q$ even, are as follows: 

\begin{enumerate}[(A)]
\item $I$. 
\item $[N]=     \left[\left( \begin{array}{cc}
     1&1 \\
     0&1
     \end{array} \right)\right]$. This conjugacy class is of size $q ^2 -1$.
\item $[c_3 (x)]$, where
$c_3 (x) =      \left( \begin{array}{cc}
x& \\
&x ^{-1}
\end{array} \right)$, with $x \neq 1$. We note that $[c_3 (x)] = [c_3 (x ^{-1})]$, so there are $(q-2)/2$ such conjugacy classes. Each one is of size $q (q + 1)$.
\item $[c_4 (z)]$, where $ c_4 (z) =      \left( \begin{array}{cc}
x&\Delta y \\
y&x
\end{array} \right)$ for $z = x + \delta y \in \E^1$ with $z \neq 1$. Since $c_4 (z) = c_4 (\bar{z})$, there are $q/2$ such conjugacy classes, each of size $q (q-1)$.\\
\end{enumerate}

The representations for $q$ even are constructed similarly to the case of $q$ odd, with a couple of differences: Since, for $q$ even, the subgroup  $S$ has odd order, it does not have characters of order two, and so the irreducible representations $X ^\pm $ do not arise. Similarly, the character $\alpha$ cannot be of order two, and so the irreducible representations $W ^\pm$ do not occur.
The character table is:

    \begin{center}
      \begin{tabular}{r|c|c|c|c|c} 
 &   &$[I]$&$[N]$&$[c_3(x)]$& $[c_4(z)]$ \\ \hline
 & Size: &1 & $q ^2 -1$ & $q(q + 1)$ &  $q (q-1)$\\ \hline 
Rep & \# &&&& \\ \hline
$U$ & 1 & $1$ & 1 & 1 & $1$\\
$X_\varphi$ & $q/2$ & $q-1$& $-1$&0& $-\varphi (z) - \varphi (z ^{-1})$ \\
$V$ & 1 & $q$ & 0 & 1 & $-1$\\
$W_\alpha$ & $(q-2)/2$&$q + 1$&1&$\alpha (x) + \alpha (x ^{-1})$&$0$ \\
      \end{tabular}
    \end{center}

\subsubsection*{\textnormal{\textbf{Representations of dimension $q-1$:}}}

The analysis here is similar to that in Section~\ref{pi-a}, which gives us:
\[ 
\delta (X_{\varphi_m}, X_{\varphi_{m'}} ) = \frac{1}{q + 1}\left(\frac{q}{2} - M_{q + 1}(m,m')\right).
\]

Analogous to Lemma \ref{max-lem}, we have when $k\ge 3$ is odd,
\begin{equation} \label{eq:max-even}
\max M_{k}(m,m') = 
\begin{cases}
\frac 12 \left( \frac kp - 1 \right) & k = p^j \\
\frac 12 \left( \frac k{p_1 p_2}(p_1+p_2-1) - 1 \right) & k = p_1 p_2 k' 
\end{cases}
\end{equation}
where $m, m'$ run over all nonzero classes of $\Z/k\Z$ such that $m \not \equiv \pm m'$
and in the latter case are the two smallest distinct primes dividing $k$.  
The above two equations give an exact expression for $\delta_{q-1}(\SL(2,q))$, $q \ge 4$.
For $k$ odd, note
\begin{equation} \label{eq:max-bound-even}
\max M_{k}(m,m') \le \frac{7k-15}{30},
\end{equation}
with equality if and only if $15 | k$.
Thus $\delta_{q-1}(\SL(2,q)) \ge \frac 4{15}$ with equality if and only if $15 | (q+1)$.

\subsubsection*{\textnormal{\textbf{Representations of dimension $q + 1$:}}}

A similar analysis to that in Section~\ref{rho-a} gives 
\[ 
\delta (W_{\alpha_m}, W_{\alpha_{m'}} ) = \frac{1}{q-1}\left(\frac{q-2}{2} - M_{q - 1}(m,m') \right).
\]
Combining this with \eqref{eq:max-even} gives an exact formula for $\delta_{q+1}(\SL(2,q))$
for $q \ge 8$,
and from \eqref{eq:max-bound-even}, we again get $\delta_{q+1}(\SL(2,q)) \ge \frac{4}{15}$
with equality if and only if $15 | (q-1)$.

\subsection{$\PSL(2,q)$, for odd $q$}
The order of $\PSL(2,q)$ is $\frac{1}{2}q (q ^2 -1)$ if $q$ is odd.
The conjugacy classes are as follows:
\begin{enumerate}[(A)]
\item $I$.
\item $[c_2 (\gamma)]$, where $c_2(\gamma) = c_2 (1,\gamma)=      \left( \begin{array}{cc}
1&\gamma \\
&1
\end{array} \right)$ for $\gamma \in \{1, \Delta\}$.
\item $[c_3 (x)]$, $(x \neq \pm 1)$, where $c_3 (x)$ is as in the previous two sections. Since $c_3 (x) = c_3 (-x) = c_3 (1/x) = c_3 (-1/x)$, the number of such conjugacy classes when $q \equiv 3 \pmod 4$ is $(q-3)/4$. In this case, all of the $c_3 (x)$ conjugacy classes have size $q (q + 1)$.

If $q \equiv 1 \pmod 4$, then $-1$ is a square in $\F_q$ and there is a conjugacy class denoted by $c_3 (\sqrt{-1})$ which has size $q (q + 1)/2$; the remaining $c_3 (x)$ conjugacy classes (there are  $(q-5)/4$ such classes) have size $q (q + 1)$.

\item $[c_4 (z)]$, for $z \in \E^1, z \neq \pm 1$, where $c_4 (z)$ is defined as in the previous two sections. Since $c_4 (z) = c_4 (\bar{z}) = c_4 (-z) = c_4 (- \bar{z})$, when $q \equiv 1 \pmod 4$, the number of such conjugacy classes is $(q-1)/4$, and they are all of size $q (q-1)$. When $q \equiv 3 \pmod 4$, we can choose $\Delta$ to be $-1$ (since it is not a square), and so we see that $\delta \in \E^1$. The conjugacy class associated to $c_4 (\delta)$ has size $q (q-1)/2$, whereas the rest of the $c_4 (z)$ conjugacy classes (of which there are $(q-3)/4$ such classes) have size $q (q-1)$.\\
\end{enumerate}

The representations of $\PSL(2,q)$ are the representations of $\SL(2,q)$ which are trivial on $-I$; this depends on the congruence class of $q$ modulo 4.

\subsubsection{$q \equiv 1 \pmod 4$}\ \\
For the character table below, the notation is the same as in previous subsections.
    \begin{center}
      \begin{tabular}{r|c|c|c|c|c|c} 
&   &$[I]$&$[c_2(\gamma)]$&$[c_3(\sqrt{-1})]$&$[c_3(x)]$& $[c_4(z)]$ \\ \hline
&Size:&1& $\frac{q ^2 -1}{2}$ & $\frac{q (q + 1)}{2}$&$q (q + 1)$& $q (q-1)$\\ \hline 
Rep & $\#$ &1&2&1&$\frac{q-5}{4}$&$\frac{q-1}{4}$ \\ \hline
$U$ & 1 & 1 &1&1&1&1 \\
$W^\pm$ & 2 & $\frac{q + 1}{2}$ & $s^\pm (1,\gamma)$&$\tau (\sqrt{-1})$ &$\tau (x)$& 0 \\
$X_\varphi$ & $\frac{q-1}{4}$ & $q-1$& $-1$&0&0& $-\varphi (z) - \varphi (z ^{-1})$ \\
$V$ & 1 & $q$ &  0 & 1&1 & $-1$\\
$W_\alpha$ & $\frac{q-5}{4}$&$q + 1$&1&$2\alpha (\sqrt{-1})$&$\alpha (x) + \alpha (x ^{-1})$&$0$ \\
      \end{tabular}
    \end{center}
\ \\

\subsubsection*{\textnormal{\textbf{Representations $W ^\pm$:}}}

The trace characters of these $(q + 1)/2$-dimensional representations agree everywhere but for the conjugacy classes $[c_2 (\gamma)]$. This gives us 
$\delta (W^+,W^-) = {2}/{q}.$

\subsubsection*{\textnormal{\textbf{Representations of dimension $q -1$:}}}

Assume $q \ge 9$.
Any two representations $X_\varphi, X_{\varphi'}$ have trace characters that may differ only for the conjugacy classes $[c_4 (z)]$.  We may view $\varphi$ as a map into $\mu_{\frac{q+1}2}$
and parameterize the $\varphi$ by $\varphi_m$ for nonzero $m \in \Z/\frac{q+1}2 \Z$ similar to 
before. Analogously, we obtain 
\[
\delta (X_{\varphi_m}, X_{\varphi_{m'}}) = \frac{1}{q + 1} \left(\frac{q-1}{2} - 2M_{\frac{q + 1}2}(m,m')\right).
\]
From \eqref{eq:max-bound-even}, this gives $\delta_{q-1}(\PSL(2,q)) \ge \frac 4{15}$,
with equality if and only if $30 | (q+1)$.

\subsubsection*{\textnormal{\textbf{Representations of dimension $q + 1$:}}}

Assume $q \ge 13$.
The analysis follows in a similar manner to that in previous sections.
View $\alpha : \F_q^\times / \{ \pm 1 \} \to \mu_{\frac{q-1}2}$, and we can parametrize such
$\alpha$ by $m \in \Z/\frac{q-1}2 \Z$ as before.
 One difference is that we must consider the case when $x = \sqrt{-1}$. Note that this is the only conjugacy class of the form $[c_3 (x)]$ that has size $q (q + 1)/2$. We find that 
$\alpha_m (\sqrt{-1}) = \alpha_{m'}(\sqrt{-1})$
if and only if $m, m'$ have the same parity.
Overall we get 
\[
\delta (W_{\alpha_m}, W_{\alpha_{m'}}) = \frac{1}{q -1} \left(\frac{q-5}{2} - 2M_{\frac{q-1}2}(m,m')
+ 1 - t_{m,m'} \right).
\]
From \eqref{eq:max-bound} we get $\delta_{q+1}(\PSL(2,q)) \ge \frac 16$ with equality
if and only if $24 | (q-1)$.

\subsubsection{$q \equiv 3 \pmod 4$}

    \begin{center}
      \begin{tabular}{r|c|c|c|c|c|c} 
&   &$[I]$&$[c_2(\gamma)]$&$[c_3(x)]$& $[c_4(z)]$&  $[c_4(\delta)]$ \\ \hline
&Size:&1& $\frac{q ^2 -1}{2}$ & $q (q + 1)$ & $q (q-1)$ & $\frac{q (q - 1)}{2}$\\ \hline 
Rep & Number &1&2&$\frac{q-3}{4}$&$\frac{q-3}{4}$&1 \\ \hline
$U$ & 1 & 1 &1&1&1&1 \\
$X^\pm$ & 2 & $\frac{q-1}{2}$ & $u^\pm (1,\gamma)$& 0&$-\psi (z)$&  $-\psi (\delta)$\\
$X_\varphi$ & $\frac{q-3}{4}$ & $q-1$& $-1$&0& $-\varphi (z) - \varphi (z ^{-1})$& $-2 \varphi (\delta)$ \\
$V$ & 1 & $q$ &  0 & 1 & $-1$&1\\
$W_\alpha$ & $\frac{q-3}{4}$&$q + 1$&1&$\alpha (x) + \alpha (x ^{-1})$&$0$&0
 \end{tabular}
    \end{center}
\ \\
where $u^\pm(1,\gamma)$ and $\psi$ are defined as before.\\

\subsubsection*{\textnormal{\textbf{Representations $X^\pm$:}}}

For $W^\pm$, the characters of the representations $X^\pm$ agree everywhere but for the conjugacy classes $[c_2 (\gamma)]$, so:
$\delta (X^+, X^-) = {2}/{q}.$

\subsubsection*{\textnormal{\textbf{Representations of dimension $q-1$:}}}
Assume $q \ge 11$.
Any two representations $X_\varphi, X_{\varphi'}$ have trace characters that may differ only for the conjugacy classes $[c_4 (z)]$. 
In the case of the conjugacy class $[c_4 (\delta)]$, we note that $\delta$ has order 2 in 
$\E^1/\{ \pm 1 \}$.  Parametrize the nontrivial maps 
$\varphi: \E^1/ \{ \pm 1 \} \to \mu_{\frac{q+1}2}$ by $1 \le m \le \frac{q-3}4$ as before.
Then $\varphi_m (\delta) = \varphi_{m'}(\delta)$ if and only if $m, m'$
have the same parity.
We obtain 
\begin{align*}
\delta (X_{\varphi_m}, X_{\varphi_{m'}}) = \frac{1}{q + 1} \left(\frac{q-3}{2}  - 2M_{\frac{q+1}2}(m,m') + 1 - t_{m,m'} \right).
\end{align*}
By \eqref{eq:max-bound}, we get $\delta_{q-1}(\PSL(2,q)) \ge \frac 16$ with equality if and
only if $24 | (q+1)$.

\subsubsection*{\textnormal{\textbf{Representations of dimension $q + 1$:}}}

Assume $q \ge 11$.
We obtain
\[
\delta (W_{\alpha_m}, W_{\alpha_{m'}}) = \frac{1}{q -1} \left(\frac{q-3}{2} -  2M_{\frac{q-1}2}(m,m')\right).
\]
By \eqref{eq:max-bound-even}, we get $\delta_{q+1}(\PSL(2,q)) \ge \frac 4{15}$,
with equality if and only if $30 | (q-1)$.

\begin{bibdiv}
\begin{biblist}

\bib{adney-yen}{article}{
   author={Adney, J. E.},
   author={Yen, Ti},
   title={Automorphisms of a $p$-group},
   journal={Illinois J. Math.},
   volume={9},
   date={1965},
   pages={137--143},
   issn={0019-2082},
   review={\MR{0171845}},
}

\bib{blichfeldt}{book}{
  title={Finite collineation groups},
  author={Blichfeldt, HF},
  year={1917},
  publisher={Chicago}
}

\bib{fulton-harris}{book} {
    AUTHOR = {Fulton, William and Harris, Joe},
     TITLE = {Representation theory},
    SERIES = {Graduate Texts in Mathematics},
    VOLUME = {129},
      NOTE = {A first course,
              Readings in Mathematics},
 PUBLISHER = {Springer-Verlag, New York},
      YEAR = {1991},
     PAGES = {xvi+551},
      ISBN = {0-387-97527-6; 0-387-97495-4},
   MRCLASS = {20G05 (17B10 20G20 22E46)},
  MRNUMBER = {1153249},
MRREVIEWER = {James E. Humphreys},
       DOI = {10.1007/978-1-4612-0979-9},
       URL = {http://dx.doi.org/10.1007/978-1-4612-0979-9},
}

\bib{gap}{article}{
  title={{GAP---Groups, Algorithms, and Programming, Version 4.8.3}},
  author={GAP Group},
  url={http://www.gap-system.org}
}

\bib{howlett-isaacs}{article}{
   author={Howlett, Robert B.},
   author={Isaacs, I. Martin},
   title={On groups of central type},
   journal={Math. Z.},
   volume={179},
   date={1982},
   number={4},
   pages={555--569},
   issn={0025-5874},
}

\bib{iwahori-matsumoto}{article}{
   author={Iwahori, Nagayoshi},
   author={Matsumoto, Hideya},
   title={Several remarks on projective representations of finite groups},
   journal={J. Fac. Sci. Univ. Tokyo Sect. I},
   volume={10},
   date={1964},
   pages={129--146 (1964)},
   issn={0040-8980},
}

\bib{khare-wintenberger}{article}{
   author={Khare, Chandrashekhar},
   author={Wintenberger, Jean-Pierre},
   title={Serre's modularity conjecture. I},
   journal={Invent. Math.},
   volume={178},
   date={2009},
   number={3},
   pages={485--504},
   issn={0020-9910},
}

\bib{langlands}{book}{
   author={Langlands, Robert P.},
   title={Base change for ${\rm GL}(2)$},
   series={Annals of Mathematics Studies},
   volume={96},
   publisher={Princeton University Press, Princeton, N.J.; University of
   Tokyo Press, Tokyo},
   date={1980},
   pages={vii+237},
   isbn={0-691-08263-4},
   isbn={0-691-08272-3},
}

\bib{me:thesis}{book}{
   author={Martin, Kimball},
   title={Four-dimensional Galois representations of solvable type and
   automorphic forms},
   note={Thesis (Ph.D.)--California Institute of Technology},
   publisher={ProQuest LLC, Ann Arbor, MI},
   date={2004},
   pages={77},
   isbn={978-0496-11395-8},
}

\bib{MBD}{book}{
   author={Miller, G. A.},
   author={Blichfeldt, H. F.},
   author={Dickson, L. E.},
   title={Theory and applications of finite groups},
   publisher={Dover Publications, Inc., New York},
   date={1961},
   pages={xvii+390},
}

\bib{rajan}{article}{
   author={Rajan, C. S.},
   title={On strong multiplicity one for $l$-adic representations},
   journal={Internat. Math. Res. Notices},
   date={1998},
   number={3},
   pages={161--172},
   issn={1073-7928},
}

\bib{ramakrishnan:SMO}{article}{
   author={Ramakrishnan, Dinakar},
   title={A refinement of the strong multiplicity one theorem for ${\rm
   GL}(2)$. Appendix to: ``$l$-adic representations associated to modular
   forms over imaginary quadratic fields. II'' [Invent.\ Math.\ {\bf 116}
   (1994), no.\ 1-3, 619--643] by R. Taylor},
   journal={Invent. Math.},
   volume={116},
   date={1994},
   number={1-3},
   pages={645--649},
   issn={0020-9910},
}

\bib{ramakrishnan:motives}{article}{
   author={Ramakrishnan, Dinakar},
   title={Pure motives and automorphic forms},
   conference={
      title={Motives},
      address={Seattle, WA},
      date={1991},
   },
   book={
      series={Proc. Sympos. Pure Math.},
      volume={55},
      publisher={Amer. Math. Soc., Providence, RI},
   },
   date={1994},
   pages={411--446},
}

\bib{serre}{article}{
   author={Serre, Jean-Pierre},
   title={Quelques applications du th\'eor\`eme de densit\'e de Chebotarev},
   language={French},
   journal={Inst. Hautes \'Etudes Sci. Publ. Math.},
   number={54},
   date={1981},
   pages={323--401},
   issn={0073-8301},
   review={\MR{644559}},
}

\bib{tunnell}{article}{
   author={Tunnell, Jerrold},
   title={Artin's conjecture for representations of octahedral type},
   journal={Bull. Amer. Math. Soc. (N.S.)},
   volume={5},
   date={1981},
   number={2},
   pages={173--175},
   issn={0273-0979},
}

\bib{walji}{article}{
   author={Walji, Nahid},
   title={Further refinement of strong multiplicity one for $\rm GL(2)$},
   journal={Trans. Amer. Math. Soc.},
   volume={366},
   date={2014},
   number={9},
   pages={4987--5007},
   issn={0002-9947},
}

\end{biblist}
\end{bibdiv}
\end{document}